\newtheorem*{theorem*}{Theorem}
\newtheorem{theorem}{Theorem}[section]
\newtheorem{fact}[theorem]{Fact}
\newtheorem{lemma}[theorem]{Lemma}
\newtheorem{corollary}[theorem]{Corollary}
\newtheorem{proposition}[theorem]{Proposition}
\theoremstyle{definition}
\newtheorem{definition}[theorem]{Definition}
\newtheorem{remark}[theorem]{Remark}
\def\Def{\operatorname{Def}}
\def\Aut{\operatorname{Aut}}
\def\Av{\operatorname{Av}}
\def\Eval{\operatorname{Eval}}
\def\conv{\operatorname{conv}}
\title{Local Keisler Measures and NIP Formulas}
\author[K. Gannon]{K. Gannon}
\address{Department of Mathematics\\
University of Notre Dame\\
Notre Dame, IN, 46656\\
 USA}
\email{kgannon1@nd.edu}
\date{November 5, 2018}
\thanks{This material is based upon research supported by the Chateaubriand Fellowship of the Office for Science \&
Technology of the Embassy of France in the United States. This material was also supported in part by NSF Grant DMS-1500671.}
\begin{document}
\begin{abstract} We study generically stable measures in the local, NIP context. We show that in this setting, a measure is generically stable if and only if it admits a natural finite approximation. 
\end{abstract}
\maketitle
\section{Introduction} 
The connection between finitely additive probability measures and NIP theories was first noticed by Keisler in his seminal paper \cite{K}. Around 20 years later, the work of Hrushovski, Peterzil, Pillay, and Simon greatly expanded this connection in \cite{NIP1}, \cite{NIP2}, and \cite{NIP3}. In particular, they introduce the notion of \textit{generically stable measures}. These measures exhibit properties of measures in stable theories. One of the most important properties of generically stable measures is that they admit a particular kind of approximation. The following is weaker version of the approximation theorem proven in \cite{NIP3},

\begin{theorem*}[\cite{NIP3}]\label{Original} Assume that $T$ is an NIP theory. Let $\mathcal{U}$ be a monster model of $T$, $M$ be a small elementary substructure of $\mathcal{U}$, and $\mu$ be a finitely additive measure on the Boolean algebra of definable sets with parameters from $\mathcal{U}$ (in free variables $\overline{x}$). Then, $\mu$ is generically stable over $M$ if and only if $\mu$ is finitely approximable over $M$, i.e. for every partitioned formula $\varphi(\overline{x};\overline{y})$, and for every $\epsilon>0$, there exists a sequence of points $a_1,...,a_n$ in $M^{\overline{x}}$ such that for any $\overline{c} \in \mathcal{U}^{\overline{y}}$,
\begin{equation*}
    |\mu(\varphi(\overline{x};\overline{c})) - Av(\overline{a})(\varphi(\overline{x};\overline{c}))| < \epsilon. 
\end{equation*}
\end{theorem*}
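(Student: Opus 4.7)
My plan is to prove the two directions separately; the forward direction is a direct verification, while the reverse relies on the Vapnik--Chervonenkis $\epsilon$-approximation theorem applied under NIP.

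For the direction finitely approximable $\Rightarrow$ generically stable, I would show that such a $\mu$ is both Borel-definable over $M$ and finitely satisfiable over $M$, and then invoke the standard NIP equivalence between generic stability and being definable-and-finitely-satisfiable. Borel-definability follows because $\overline{c} \mapsto \mu(\varphi(\overline{x};\overline{c}))$ is a uniform limit (as $\epsilon \to 0$) of the $M$-definable finite-valued functions $\overline{c} \mapsto \Av(\overline{a})(\varphi(\overline{x};\overline{c}))$; uniform limits of such functions are Borel over the relevant type space. Finite satisfiability is immediate: if $\mu(\varphi(\overline{x};\overline{c})) > \epsilon$, then any $\epsilon$-approximating tuple $\overline{a}$ in $M$ must have $\Av(\overline{a})(\varphi(\overline{x};\overline{c})) > 0$, so some $a_i \in M$ realizes $\varphi(\overline{x};\overline{c})$.

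For the direction generically stable $\Rightarrow$ finitely approximable, the core input is the Vapnik--Chervonenkis $\epsilon$-approximation theorem. Since $T$ is NIP, the family $\{\varphi(\overline{x};\overline{c}) : \overline{c} \in \mathcal{U}^{\overline{y}}\}$ has finite VC dimension, so for each $\epsilon > 0$ there exists $n = n(\epsilon,\varphi)$ such that for any probability measure $\nu$, a random $\nu^{(n)}$-sample is a uniform $\epsilon$-approximation of $\nu$ with positive $\nu^{(n)}$-probability. Applied to $\mu$, this gives $\mu^{(n)}(G) > 0$ where $G = \{\overline{a} \in \mathcal{U}^n : \sup_{\overline{c}} |\mu(\varphi(\overline{x};\overline{c})) - \Av(\overline{a})(\varphi(\overline{x};\overline{c}))| < \epsilon\}$; the task then reduces to locating a tuple in $M^n \cap G$.

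The main obstacle is that $G$ is not itself definable, but only an infinite intersection $\bigcap_{\overline{c}} G_{\overline{c}}$ of the definable sets $G_{\overline{c}} = \{\overline{a} : |\mu(\varphi(\overline{x};\overline{c})) - \Av(\overline{a})(\varphi(\overline{x};\overline{c}))| < \epsilon\}$, so naive finite satisfiability of $\mu^{(n)}$ does not directly produce an $M$-witness. I would resolve this by exploiting Borel-definability of $\mu$: the map $\overline{c} \mapsto \mu(\varphi(\overline{x};\overline{c}))$ factors through the compact space $S_{\varphi^{*}}(M)$ and, by Borel regularity, is approximated up to $\epsilon$ by a simple function constant on finitely many clopens. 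Choosing one representative $\overline{c}_i$ per clopen reduces the uniform approximation requirement to a single definable condition on $\overline{a}$ involving only the $\overline{c}_i$. Since generically stable measures in NIP are closed under products, $\mu^{(n)}$ is generically stable and hence finitely satisfiable over $M$, so this definable positive-$\mu^{(n)}$-measure condition is met by some $\overline{a} \in M^n$; the near-constancy of $\mu(\varphi(\overline{x};-))$ on each clopen then propagates the approximation from the finite set $\{\overline{c}_i\}$ to all $\overline{c} \in \mathcal{U}^{\overline{y}}$, after rescaling $\epsilon$.
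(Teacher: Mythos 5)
This statement is not proved in the paper: it is quoted from \cite{NIP3} as motivation, and the paper instead proves a local generalization (Theorem~\ref{last}) by a completely different, functional-analytic route: finite satisfiability puts $F^{\varphi}_\mu$ in the pointwise closure of $\conv_{\mathbb{Q}}(\mathbb{F}^\varphi_M)$, NIP and the Bourgain--Fremlin--Talagrand theorem upgrade this to a pointwise-convergent \emph{sequence}, continuity of $F^{\varphi}_\mu$ (from definability) together with Theorem~\ref{K} makes that convergence weak, and Mazur's lemma produces a uniformly convergent convex combination, which \emph{is} a finite approximation. Your proposal follows the original Hrushovski--Pillay--Simon route via the VC $\epsilon$-approximation theorem and Morley products; this is a legitimate alternative method, not the paper's.

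However, your resolution of what you correctly identify as the main obstacle contains a real gap. You propose to partition $S_{\overline{y}}(M)$ into $M$-definable clopens $\psi_1,\dots,\psi_k$ on which $\mu(\varphi(\overline{x};\cdot))$ is nearly constant, pick one representative $\overline{c}_i$ per cell, use finite satisfiability of $\mu^{(n)}$ to find $\overline{a}\in M^n$ with $\Av(\overline{a})(\varphi(\overline{x};\overline{c}_i))\approx\mu(\varphi(\overline{x};\overline{c}_i))$ for the finitely many $\overline{c}_i$, and then ``propagate'' to arbitrary $\overline{c}$ by the near-constancy of $\mu$. But $\Av(\overline{a})(\varphi(\overline{x};\cdot))$ has \emph{no} reason to be nearly constant on the cells of $\mu$'s partition: two $\overline{c},\overline{c}'$ in the same $\psi_i$-cell need not agree on any of the formulas $\varphi(a_j,\overline{y})$ for the sampled $a_j$, so $\Av(\overline{a})(\varphi(\overline{x};\overline{c}))$ and $\Av(\overline{a})(\varphi(\overline{x};\overline{c}'))$ can be arbitrary in $\{0,\tfrac1n,\dots,1\}$ even while $\mu$ is within $\epsilon$ at both. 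Matching $\Av(\overline{a})$ to $\mu$ at the $\overline{c}_i$ therefore does not control $\Av(\overline{a})$ at other points of the cell, and the argument fails at the ``propagation'' step. The correct repair is to replace the finite test against $\{\overline{c}_i\}$ by the single $M$-definable condition
\begin{equation*}
D \;=\; \Bigl\{\overline{a}\in\mathcal{U}^{\overline{x}\cdot n}:\ \forall\overline{c}\ \bigwedge_{i\le k}\bigl(\psi_i(\overline{c})\rightarrow |\Av(\overline{a})(\varphi(\overline{x};\overline{c}))-r_i|<\epsilon\bigr)\Bigr\},
\end{equation*}
which quantifies over \emph{all} $\overline{c}$ within each cell (here $r_i$ is a rational approximating the near-constant value of $\mu$ on $\psi_i$). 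Then $G\subseteq D$, so positivity of $\mu^{(n)}(D)$ follows from a regularity/limit argument over the closed set $G=\bigcap_{\overline{c}}G_{\overline{c}}$ in the type space (using the uniform VC bound over finite subfamilies), and finite satisfiability of $\mu^{(n)}$ over $M$ then produces an $M$-witness in $D$, which is what you actually need. As a small separate remark, in the forward direction your uniform-limit argument actually yields that $\overline{c}\mapsto\mu(\varphi(\overline{x};\overline{c}))$ is \emph{continuous} on $S_{\overline{y}}(M)$, i.e.\ that $\mu$ is \emph{definable} over $M$, not merely Borel-definable; this is the stronger and needed property, since in NIP every $M$-invariant measure is already Borel-definable over $M$.
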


The purpose of this paper is to prove a local version of the theorem above. We prove the following, 

\begin{theorem*}[Main Theorem]\label{Main Theorem} Let $T$ be a first order theory. Let $\mathcal{U}$ be a monster model of $T$, $M$ be a small elementary substructure of $\mathcal{U}$, and $\varphi(\overline{x};\overline{y})$ an NIP formula. Let $\mu$ be a finitely additive measure on the collection of $\varphi$-definable sets with parameters from $\mathcal{U}$. Then $\mu$ is generically stable over $M$ (as in Definition \ref{cont}) if and only if $\mu$ is finitely approximable over $M$ (as in Definition \ref{finap}). 
\end{theorem*}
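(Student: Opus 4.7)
The direction \emph{finitely approximable implies generically stable} should be the routine one. If $\mu$ can be $\epsilon$-approximated uniformly by averages $\Av(a_1,\dots,a_n)$ of $M$-points, then each such average is visibly finitely satisfiable in $M$, and its $\varphi$-type over $\mathcal{U}$ has a defining schema definable over $M$. Both properties are preserved under the weak-$*$ limit, which yields the ingredients (finite satisfiability, definability over $M$, and the symmetry or self-Morley condition) that I expect Definition \ref{cont} to wrap into generic stability.

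For the converse, assume $\mu$ is generically stable over $M$. The starting input is the Vapnik--Chervonenkis theorem for measures: because $\varphi(\overline{x};\overline{y})$ is NIP, the family $\{\varphi(\overline{x};\overline{c}) : \overline{c} \in \mathcal{U}^{\overline{y}}\}$ has finite VC dimension, and hence for every $\epsilon > 0$ there exist $n = n(\epsilon)$ and a tuple $(b_1,\dots,b_n) \in \mathcal{U}^{\overline{x}}$ satisfying
\begin{equation*}
\bigl|\mu(\varphi(\overline{x};\overline{c})) - \Av(b_1,\dots,b_n)(\varphi(\overline{x};\overline{c}))\bigr| < \epsilon \quad \text{for every } \overline{c} \in \mathcal{U}^{\overline{y}}.
\end{equation*}
The task is then to replace the $b_i$'s by points of $M$.

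The plan is to sample from $\mu$ itself. I would build a localized Morley power $\mu^{(n)}$: using the $M$-definability of $\mu$ on $\varphi$-formulas, extend $\mu$ to the Boolean algebra generated by $\varphi(\overline{x}_i;\overline{y})$ for $i \le n$ via iterated integration, with the definability schema providing a value for each conditional integral. The finite-VC uniform law of large numbers then shows that the event
\begin{equation*}
G_\epsilon := \Bigl\{(a_1,\dots,a_n) : \sup_{\overline{c} \in \mathcal{U}^{\overline{y}}}\bigl|\mu(\varphi(\overline{x};\overline{c})) - \Av(a_1,\dots,a_n)(\varphi(\overline{x};\overline{c}))\bigr| < \epsilon \Bigr\}
\end{equation*}
carries nearly full $\mu^{(n)}$-measure once $n$ is large enough. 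Generic stability of $\mu$ transfers to finite satisfiability of $\mu^{(n)}$ in $M$, so the positive-measure event $G_\epsilon$ is hit by an $M$-tuple, and that tuple is the desired finite approximation.

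The main technical obstacle is making the preceding outline honest in a purely local setting. Globally $\mu$ is definable on every formula, so $\mu^{(n)}$ is unambiguous and $G_\epsilon$ has a clear meaning as a measurable event; here $\mu$ is only controlled on $\varphi$-instances, so $G_\epsilon$ is an infinite intersection (over $\overline{c}$) of $\varphi$-Boolean conditions and is not literally a member of the local Boolean algebra. The workaround I would adapt from \cite{NIP3} uses the VC shatter-function bound to reduce $G_\epsilon$ to a Boolean combination of finitely many $\varphi$-instances (with parameters depending on $\epsilon$), at which point finite satisfiability of $\mu^{(n)}$ in $M$ applies directly. Once this reduction is carried out, the rest of the argument follows the structure of the global theorem quoted at the start of the paper.
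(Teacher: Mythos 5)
Your approach differs substantially from the paper's. You are attempting to port the global argument from \cite{NIP3}: use the VC theorem to get approximating tuples in $\mathcal{U}$, build a Morley power $\mu^{(n)}$, show the good event $G_\epsilon$ carries large $\mu^{(n)}$-measure, and invoke finite satisfiability of the power in $M$. The paper instead never constructs a Morley power at all; it encodes the measure as a function $F^{\varphi}_{\mu}$ on $S_y(M)$, shows (via Ben Yaacov's continuous VC classes, Theorem \ref{sd}) that $\conv(\mathbb{F}^{\varphi}_{M})$ is sequentially dependent when $\varphi$ is NIP, applies Bourgain--Fremlin--Talagrand to extract a pointwise convergent sequence from $\conv_{\mathbb{Q}}(\mathbb{F}^{\varphi}_{M})$, upgrades pointwise convergence to weak convergence (using continuity of $F^{\varphi}_{\mu}$, which is what definability buys you), and then Mazur's Lemma yields uniform approximation by convex combinations, i.e.\ by averages of $M$-points.

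Your plan has genuine gaps which are precisely the obstacles the paper is built to avoid, and which you flag but do not close. First, the ``localized Morley power'' is not available off the shelf: to iterate, you must integrate $b \mapsto \mu^{(k)}(\theta(x_1,\dots,x_k,b;\bar{c}))$ against (a copy of) $\mu$, and you have to verify that this map is $\mu$-integrable given only $\Delta_\varphi$-definability rather than global definability; this needs an actual argument. Second, you write ``generic stability of $\mu$ transfers to finite satisfiability of $\mu^{(n)}$ in $M$'' — this is one of the nontrivial lemmas in \cite{NIP3}, proved there using tools (invariant extensions, symmetry of the Morley product) that do not automatically transfer to the local setting and which you have not established. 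Third, the reduction of $G_\epsilon$ from an infinite intersection to a finite Boolean combination of $\varphi$-instances is asserted via a ``shatter-function bound'' but not carried out, and its parameters depend delicately on how the first two steps were handled. Until these are filled in, your outline is a plan for a different paper rather than a proof; the chain $F^{\varphi}_{\mu} \in cl(\conv_{\mathbb{Q}}(\mathbb{F}^{\varphi}_{M}))$ from finite satisfiability (Proposition \ref{fslp}), pointwise extraction via BFT, weak convergence from continuity, and Mazur's Lemma is the mechanism the author uses exactly because it sidesteps every one of these issues.
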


The proof of our Main Theorem involves translating the concept of \textit{generic stability} into the framework of functional analysis. From this vantage point, we can apply an important result of Bourgain, Fremlin, and Talagrand \cite{BFT}, namely Theorem \ref{equiv}. The connection between NIP formulas, local types, and Theorem \ref{equiv} was first noticed by Simon in \cite{S} and further work has been done by Khanaki and Pillay in \cite{Kh} and \cite{KP}. We can further extend this connection to local measures via Ben Yaacov's work on continuous VC classes \cite{B}. 

This paper is organized as follows: In section 2, we fix some model theoretic conventions. We will also provide all necessary functional analysis background. 

In section 3, we connect NIP formulas, continuous VC classes, and the important result of Bourgain, Fremlin, and Talagrand mentioned earlier in the introduction. 

In section 4, we begin by defining some important properties for local measures. We then translate these properties into the language of functional analysis. Using the theorems from functional analysis outlined in section 2 and the connection established in section 3, we will prove our Main Theorem. 

\subsection*{Acknowledgments} I would first and foremost like to thank my advisor Sergei Starchenko for many helpful and indispensable discussions. I would also like to thank Elisabeth Bouscaren, Ita\"i Ben Yaacov, and Artem Chernikov for fruitful discussions during my time in Paris and the Chateaubraind Fellowship for making this travel possible. Finally, I thank Gabriel Conant and Jinhe Ye for many editorial suggestions and corrections. 

\section{Notation and Preliminaries} 
\subsection{Model Theory Conventions} Our notation for the most part is standard.  We refer the reader to \cite{guide} for a thorough introduction to NIP theories and formulas.

Throughout this paper we fix $T$ a first order theory in a countable\footnote{For uncountable theories, we simply take the reduct to a countable language containing all the formulas we are working with.} language $L$, $\mathcal{U}$ a monster model of $T$, and $M$ a small elementary substructure of $\mathcal{U}$. The letters $x$ and $y$ will denote tuples of variables. We let $l(x)$ denote the length of a tuple. We let $\varphi(x;y)$ be a partitioned formula (without parameters) and with object variables $x$ and parameters variables $y$. The formula $\varphi^*(x;y)$ will denote the exact same formula as $\varphi(x;y)$, but with the roles exchanged for parameter and variable tuples. 

We recall that a formula $\varphi(x;y)$ is IP if for every natural number $n$, there exists $A \subset \mathcal{U}^{l(x)}$ such that $|A| \geq n$ for every $K \subseteq A$ there exists some $b_{K}$ in $\mathcal{U}^{l(y)}$, so that $\{c \in A: \mathcal{U} \models \varphi(c,b_{K})\} = K$. If $\varphi(x;y)$ is not IP, then we say that $\varphi(x;y)$ is NIP. We now recall two basic facts whose proofs can be found in \cite{guide}. First, if $\varphi(x;y)$ is NIP, then $\varphi^{*}(x;y)$ is NIP. Second, any Boolean combination of NIP formulas is also NIP. 

If $B \subseteq \mathcal{U}$, we let $S_x(B)$ denote the space of types in variable $x$ with parameters from $B$. Moreover, we denote the space of $\varphi$-types in object variable $x$ with parameters from $B$ as $S_{\varphi}(B)$. If $b \in \mathcal{U}^n$, we let $tp(b/B)$ denote the type of $b$ over $B$. We will routinely write $\mathcal{U}^{l{(x)}}$ as $\mathcal{U}^x$ or even simply as $\mathcal{U}$ when $l(x)$ is clear. We let $\Def_{x}(\mathcal{U})$ be the Boolean algebra of all definable subsets of $\mathcal{U}^{x}$ and let $\Def_{\varphi}(\mathcal{U})$ be the subalgebra of $\Def_{x}(\mathcal{U})$ generated by $\{\varphi(x;b): b \in \mathcal{U}^{y}\}$. 

A \textit{formula} is an $L$-formula and \textit{$\varphi$-formula} is a Boolean combination of elements in $\{\varphi(x;b): b \in \mathcal{U}^{y}\}$. We will abuse notation and routinely identify definable sets with the formulas which define them. We let $\mathfrak{M}_x({\mathcal{U}})$ be the collection of finitely additive probability measures on $\Def_{x}(\mathcal{U})$ and $\mathfrak{M}_{\varphi}(\mathcal{U})$ be the collection of finitely additive probability measures on $\Def_{\varphi}(\mathcal{U})$.

If $\overline{a} = a_1,...,a_n$ is a sequence of points in $\mathcal{U}^{x}$, we let $\Av(\overline{a})$ be the measure on $\Def_{x}(\mathcal{U})$ where for every formula, $\psi(x)$, 
\begin{equation*}
    \Av(\overline{a})(\psi(x)) = \frac{|\{i: \mathcal{U} \models \psi(a_i)\}|}{n}.
\end{equation*} 
Finally, we let $\Aut(\mathcal{U}/M)$ denote the collection of automorphisms of $\mathcal{U}$ which fix $M$ pointwise. 

\subsection{Functional Analysis} We recall some definitions and theorems from functional analysis. We refer the reader to \cite{Conway} as a standard reference on the subject. 

Let $X$ be a set, and let $\mathbb{R}^X$ denote the collection of all functions from $X$ to $\mathbb{R}$. $\mathbb{R}^X$ is a topological space with the product topology. If $A \subseteq \mathbb{R}^X$, we let $cl(A)$ be the topological closure of $A$ in $\mathbb{R}^X$. 

Let $(f_i)_{i \in \mathbb{N}}$ be a sequence in $\mathbb{R}^X$. We recall two different notions of convergence:
\begin{enumerate}
    \item $(f_i)_{i \in \mathbb{N}}$ \textit{converges pointwise} to a function $f$, written $f_i \to f$, if for every $b \in X$ and for every $\epsilon >0$ there is some natural number $N$ where for any $n > N$, $|f_{n}(b) - f(b)| < \epsilon$. 
    \item $(f_i)_{i \in \mathbb{N}}$  \textit{converges uniformly} to a function $f$, written $f_i \to_{u} f$, if for every $\epsilon >0$ there is some natural number $N$ where for any $n > N$, $\sup_{b \in X}|f_{n}(b) - f(b)| < \epsilon$.
\end{enumerate}

Assume that $X$ is a topological space and let $C(X)$ denote the space of continuous from $X$ to $\mathbb{R}$. If $X$ is a compact Hausdorff space, then $C(X)$ is a Banach space with the uniform norm, $||\cdot||_{\infty}$, where $||f||_{\infty} = \sup_{x \in X} |f(x)|$.

Let $Y$ be a real Banach space, let $y \in Y$, and let $(y_i)_{i \in \mathbb{N}}$ be a sequence in $Y$. We say that $(y_i)_{i \in \mathbb{N}}$ \textit{converges weakly} to $y$, written $y_i \to_w y$, if for all continuous linear functionals $G:Y \to \mathbb{R}$, we have that $\lim_{i \in \mathbb{N}} G(y_i) = G(y)$.

We note if $X$ is a compact Hausdorff space, such as $S_{y}(M)$, then for any sequence of functions $(f_i)_{i \in \mathbb{N}}$ in $C(X)$, one may determine whether this sequence converges pointwise, weakly, or uniformly. This is clear since $C(X)$ is a Banach space and $C(X) \subset \mathbb{R}^X$. After the following definition, we will state some theorems which connect these notions of convergence.

\begin{definition}[Convex Combination] Let $Y$ be a vector space and $A \subseteq Y$. We let $\conv(A)$ be the convex hull of $A$, and we let $\conv_{\mathbb{Q}}(A)$ denote the collection of all rational convex combinations of elements from $A$, i.e.,
\begin{equation*} 
\conv_{\mathbb{Q}}(A) = \left\{\sum^n_{i=1}r_ia_i : a_i \in A \text{ ; } n \in \mathbb{N} \text{ ; }  r_i \in \mathbb{Q}^+ \text{ ; }  \sum_{i=1}^nr_i=1\right\}.
\end{equation*}
\end{definition}

We now recall some theorems from functional analysis. Our first theorem is a trivial consequence of Mazur's Lemma \cite{BS} and connects the notions of uniform convergence and weak convergence. We will refer to the following theorem simply as Mazur's Lemma. 

\begin{lemma}[Mazur's Lemma \cite{BS}]\label{GR} Let $(Y,||\cdot||)$ be a Banach space, $y \in Y$, $(a_i)_{i \in \mathbb{N}}$ be a sequence of elements in $Y$, and $A = \{a_i: i \in \mathbb{N}\}$. If $a_i \to_w y$, then there is a sequence of $z_i \in \conv_{\mathbb{Q}}(A)$ such that $\lim_{i\to \infty}||z_i - y|| = 0$.
\end{lemma}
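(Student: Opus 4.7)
The plan is to reduce the statement to the classical form of Mazur's theorem, which asserts that for any convex subset $C$ of a real Banach space, the weak closure and the norm closure of $C$ coincide. This is the content cited from \cite{BS}, and I would use it as a black box. Apply it to $C := \conv(A)$. Each $a_i$ lies in $C$, and by hypothesis $a_i \to_w y$, so $y$ belongs to the weak closure of $C$. By the classical theorem, $y$ therefore lies in the norm closure of $C$, yielding a sequence $w_i \in \conv(A)$ with $\|w_i - y\| \to 0$.

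The remaining work is to replace the real convex combinations $w_i$ by rational ones, i.e.\ to move from $\conv(A)$ to $\conv_{\mathbb{Q}}(A)$ without losing norm convergence. Write $w_i = \sum_{k=1}^{n_i} t_k^{(i)} a_{j_k^{(i)}}$ with $t_k^{(i)} \in [0,1]$, $\sum_k t_k^{(i)} = 1$. I would choose, for each $i$, rational numbers $r_k^{(i)} \in \mathbb{Q}^+$ approximating the $t_k^{(i)}$ so closely that the induced element $z_i := \sum_k r_k^{(i)} a_{j_k^{(i)}}$ satisfies $\|z_i - w_i\| < 1/i$. The standard normalization trick is to pick rational approximations for $t_1^{(i)},\dots,t_{n_i-1}^{(i)}$ first and set the last coefficient to $1 - \sum_{k<n_i} r_k^{(i)}$, so the weights sum to exactly $1$; one must check this last coefficient is positive, which holds provided the approximations are fine enough (using that $t_{n_i}^{(i)}>0$, and discarding any summand where $t_k^{(i)} = 0$). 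The bound $\|z_i - w_i\|$ is controlled by $\sum_k |r_k^{(i)} - t_k^{(i)}| \cdot \max_k \|a_{j_k^{(i)}}\|$, which can be made arbitrarily small since only finitely many $a_{j_k^{(i)}}$ are involved for each fixed $i$. Then $\|z_i - y\| \le \|z_i - w_i\| + \|w_i - y\| \to 0$.

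The main obstacle is purely notational: executing the rational approximation while maintaining the convex-combination constraint $\sum r_k = 1$. There is no deep analytic content beyond the invocation of the classical Mazur theorem; the work is entirely in bookkeeping the coefficients. No NIP or measure-theoretic input is needed here, so the lemma sits at the level of soft functional analysis and can be quoted in later sections without further elaboration.
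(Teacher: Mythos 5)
The paper does not actually prove this lemma: it is stated with a citation to Diestel \cite{BS} and described in the preceding sentence as ``a trivial consequence of Mazur's Lemma,'' i.e.\ of the classical fact that for a convex subset of a Banach space the weak and norm closures coincide. Your write-up is a correct proof of exactly that reduction: you quote classical Mazur on $\conv(A)$ to get $w_i \in \conv(A)$ with $\|w_i - y\|\to 0$, then perturb the real coefficients to rationals, handling the sum-to-one constraint by solving for the last coefficient and discarding zero-weight summands so the resulting coefficients stay in $\mathbb{Q}^+$ as the paper's definition of $\conv_{\mathbb{Q}}$ requires. This matches the intent of the paper's citation; the only content beyond the black-boxed theorem is the rational-approximation bookkeeping, which you carry out correctly.
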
 

In particular, if $X$ is a compact Hausdorff space, $(f_i)_{i \in \mathbb{N}}$ is a sequence in $C(X)$, and $f_i \to_{w} g$, then there exists $h_i \in \conv_{\mathbb{Q}}(\{f_i: i \leq \mathbb{N}\})$ such that $h_i \to_u g$. 

The following theorem is an application of the the dominated convergence theorem and the Riesz representation theorem. This theorem connects the notions of pointwise convergence and weak convergence. 

\begin{theorem}[\cite{Semadeni}]\label{K} Let $K$ be a compact Hausdorff space, $f \in C(K)$, and $(f_i)_{i \in \mathbb{N}}$ be a sequence in $C(K)$. Then the following are equivalent: 
\begin{enumerate}[($i$)]
\item $f_i \to_w f$.
\item $f_i \to f$ and $\sup_{i \in \mathbb{N}}||f_i||_{\infty}< \infty$. 
\end{enumerate} 
\end{theorem}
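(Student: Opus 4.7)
The plan is to prove each direction by invoking a classical functional-analytic tool: the Riesz representation theorem (plus dominated convergence) for the direction $(ii) \Rightarrow (i)$, and the uniform boundedness principle (Banach--Steinhaus) plus evaluation functionals for the direction $(i) \Rightarrow (ii)$.

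For the implication $(ii) \Rightarrow (i)$, I would start by letting $G : C(K) \to \mathbb{R}$ be an arbitrary continuous linear functional and invoking the Riesz representation theorem to fix a finite signed regular Borel measure $\mu$ on $K$ such that $G(g) = \int_K g \, d\mu$ for every $g \in C(K)$. Writing $M = \sup_{i \in \mathbb{N}} \|f_i\|_{\infty} < \infty$, the constant function $M$ is integrable against the total variation $|\mu|$ (which is finite because $K$ is compact), and $|f_i(x)| \le M$ for all $i$ and all $x \in K$. Since $f_i \to f$ pointwise, the dominated convergence theorem yields $\int_K f_i \, d\mu \to \int_K f \, d\mu$, i.e., $G(f_i) \to G(f)$. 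As $G$ was arbitrary, this gives $f_i \to_w f$.

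For $(i) \Rightarrow (ii)$, the pointwise convergence is immediate: for each $b \in K$ the evaluation map $\Eval_b : C(K) \to \mathbb{R}$, $g \mapsto g(b)$, is a continuous linear functional (under Riesz it corresponds to the Dirac measure at $b$), so weak convergence specializes to $f_i(b) = \Eval_b(f_i) \to \Eval_b(f) = f(b)$. For the uniform bound, I would appeal to the standard fact that weakly convergent sequences in a Banach space are norm-bounded: view each $f_i$ as an element of the double dual $C(K)^{**}$ via the canonical embedding, so that for each $\mu \in C(K)^*$ the numerical sequence $(\mu(f_i))_{i \in \mathbb{N}}$ is convergent and hence bounded; the uniform boundedness principle then forces $\sup_i \|f_i\|_{\infty} < \infty$.

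The main obstacle is mostly conceptual rather than computational: one must correctly identify $C(K)^*$ via Riesz (so that the dominated convergence theorem applies to a genuine finite measure) and recall that weak convergence in a Banach space implies norm boundedness via Banach--Steinhaus. Neither step involves subtle estimates, and no use of NIP or any model-theoretic content is required; the result is purely a statement about Banach spaces of continuous functions on a compact Hausdorff space.
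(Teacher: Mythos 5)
Your proof is correct and matches the approach the paper alludes to: the paper does not give a full proof of Theorem~\ref{K} (it is cited from Semadeni), but the text preceding it notes that the result is ``an application of the dominated convergence theorem and the Riesz representation theorem,'' which is precisely your argument for $(ii)\Rightarrow(i)$; your $(i)\Rightarrow(ii)$ via evaluation functionals and Banach--Steinhaus is the standard complementary step and is sound.
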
 

The next theorem is the result by Bourgain, Fremlin, and Talagrand \cite{BFT} which we alluded to in the introduction. In fact, a much more general statement is proven in \cite{BFT} than the one we provide. The connection between this theorem and NIP formulas is well known and has been expanded upon in \cite{S}, \cite{Kh}, and \cite{KP}. This theorem is slightly more technical than the last two, but essentially it allows one to find pointwise convergent sequences under particular tameness conditions. Before we can state the theorem, we define the tameness condition. 

\begin{definition}[Sequential Independence] Let $\mathbf{X} \subset \mathbb{R}^Y$. Then we say that $\mathbf{X}$ is \textit{sequentially independent} if there exists a sequence $(f_n)_{n \in \mathbb{N}}$ of elements in $\mathbf{X}$, an $r \in \mathbb{R}$, and an $\epsilon > 0$ such that for every $I \subseteq \mathbb{N}$, there exists some  $b_{I}$ in $Y$ such that, 
\begin{equation*}
    \{n \in \mathbb{N}: f_n(b_{I}) \leq r\} = I \text{ and } \{n \in \mathbb{N}: f_n(b_{I}) \geq r + \epsilon \} = \mathbb{N}\backslash I. 
\end{equation*}
If $\mathbf{X}$ is not sequentially independent, we say that $\mathbf{X}$ is \textit{sequentially dependent}. 
\end{definition}

\begin{theorem}[\cite{BFT}]\label{equiv} Let $X$ be a compact Hausdorff space, $A \subseteq C(X)$, and $|A| \leq \aleph_0$.  Assume that $\sup_{f \in A} ||f||_{\infty} < \infty$. Then the following are equivalent: 
\begin{enumerate}[($i$)]
\item $A$ is sequentially dependent. 
\item for each $f \in \mathbb{R}^X$, if $f \in cl(A)$, then there exists a sequence of elements $(f_i)_{ i \in \mathbb{N}}$ from $A$ such that $f_i \to f$.
\end{enumerate} 
\end{theorem}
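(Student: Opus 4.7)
This is a version of the Bourgain--Fremlin--Talagrand angelicity theorem \cite{BFT}, and I would prove each direction separately.

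For the contrapositive of (ii) $\Rightarrow$ (i), suppose $A$ contains a sequentially independent sequence $(f_n)$ with parameters $r, \epsilon$ and witnesses $\{b_I : I \subseteq \mathbb{N}\}$. Since $A$ is uniformly bounded by some $M$, $\{f_n\}$ lies in the Tychonoff-compact cube $[-M,M]^X$, so for each free ultrafilter $\mathcal{U}$ on $\mathbb{N}$ the pointwise ultralimit $f_\mathcal{U}(x) := \lim_\mathcal{U} f_n(x)$ is an element of $cl(A)$. By construction $f_\mathcal{U}(b_I) \le r$ iff $I \in \mathcal{U}$, and $f_\mathcal{U}(b_I) \ge r+\epsilon$ otherwise, so distinct ultrafilters on $\mathbb{N}$ give distinct $f_\mathcal{U}$'s, producing $2^{2^{\aleph_0}}$ elements of $cl(A)$. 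But $A$ is countable, so the set of all pointwise limits of sequences from $A$ has cardinality at most $2^{\aleph_0}$. Hence for ``most'' $\mathcal{U}$, $f_\mathcal{U}$ lies in $cl(A)$ but is not a pointwise limit of any sequence from $A$, contradicting (ii).

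The main direction (i) $\Rightarrow$ (ii) rests on Rosenthal's dichotomy: any uniformly bounded sequence of real-valued functions either has a pointwise convergent subsequence or has a subsequence that is sequentially independent in the sense defined above. Under assumption (i) every sequence in $A$ therefore has a pointwise convergent subsequence. Rosenthal's dichotomy is the principal technical obstacle and is proved by a Ramsey / tree-pruning argument on the combinatorial patterns of ``high'' and ``low'' values of the functions; this is where the NIP-style combinatorics of the definition is genuinely used.

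To conclude, fix $f \in cl(A)$ and reduce to a compact metric setting via the evaluation map $e : X \to \mathbb{R}^{A \cup \{f\}}$, $e(x) = (g(x))_{g \in A \cup \{f\}}$. Its image is uniformly bounded, so $K := \overline{e(X)}$ is compact metric, and each $g \in A \cup \{f\}$ extends continuously to $\widetilde g \in C(K)$. A routine transfer argument shows $\widetilde A$ is sequentially dependent on $K$ and $\widetilde f \in cl(\widetilde A)$ in $\mathbb{R}^K$. Choosing a countable dense $K_0 \subseteq K$, a diagonal argument using $\widetilde f \in cl(\widetilde A)$ produces $(\widetilde g_n) \subseteq \widetilde A$ with $\widetilde g_n \to \widetilde f$ pointwise on $K_0$, and Rosenthal's dichotomy extracts a pointwise convergent subsequence $(\widetilde g_{n_k}) \to \widetilde h$ on all of $K$. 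The final step is the standard BFT refinement forcing $\widetilde h = \widetilde f$ on $K$ by combining agreement on the dense $K_0$ with sequential dependence via iteration of the extraction; transferring back through $e$ then yields $g_{n_k} \to f$ pointwise on $X$, as required.
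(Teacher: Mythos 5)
The paper does not prove Theorem \ref{equiv}: it states it as a black-box citation to Bourgain, Fremlin, and Talagrand \cite{BFT}, noting explicitly that a more general statement is established there. So there is no in-paper argument to compare your attempt against; what follows is an assessment of your sketch on its own terms.

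Your outline correctly identifies the two standard halves of the argument. The cardinality count for ($ii$) $\Rightarrow$ ($i$) is sound: a sequentially independent sequence in the uniformly bounded, countable $A$ yields $2^{2^{\aleph_0}}$ distinct pointwise ultralimits in $cl(A)$, while sequential pointwise limits from a countable $A$ are at most $2^{\aleph_0}$ in number. For ($i$) $\Rightarrow$ ($ii$), Rosenthal's dichotomy together with a separable reduction is indeed the right architecture. However, two concrete gaps remain. First, the reduction via $e : X \to \mathbb{R}^{A \cup \{f\}}$ is not as ``routine'' as you say: to show that $\widetilde{A}$ is sequentially dependent on $K$, an $(r,\epsilon)$-independent family over $K$ would come with witnesses $\kappa_I \in K$, and since $e(X)$ is only dense in $K$, you must replace each $\kappa_I$ by a point of $e(X)$ and check the $(r,\epsilon)$-gaps survive; similarly, the claim $\widetilde{f} \in cl(\widetilde{A})$ in $\mathbb{R}^K$ does not transfer automatically from $f \in cl(A)$ in $\mathbb{R}^X$, because $\widetilde{f}$ must be approximated at points of $K \setminus e(X)$. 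Second, and more seriously, ``the standard BFT refinement forcing $\widetilde h = \widetilde f$ on $K$'' is where the entire theorem actually lives. Agreement of $\widetilde h$ and $\widetilde f$ on a countable dense $K_0$ plus pointwise convergence of $\widetilde g_{n_k}$ does not yield $\widetilde h = \widetilde f$: two Baire class $1$ functions can agree on a dense set and differ elsewhere. The genuine argument --- that any pointwise-compact set of Baire class $1$ functions on a Polish space is Fr\'echet--Urysohn, proved via a transfinite extraction or a delicate tree argument --- occupies several pages of \cite{BFT} and cannot be dismissed as a closing step. Your plan has the right shape, but most of the proof is inside the box you have left closed.
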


\subsection{Model Theory and Functions} In this subsection, we briefly discuss special types of functions. We recall that $\mathcal{U}$ is a fixed monster model of $T$ and $M$ is a small elementary substructure of $\mathcal{U}$.

\begin{definition}\label{F}
If $\varphi(x;y)$ is a partitioned formula and $a$ is in $M^{x}$, we define the map $F_{a}^{\varphi}: S_{y}(M)\to \{0,1\} \subset \mathbb{R}$ via,
\begin{equation*}
    F_{a}^{\varphi}(p)=\begin{cases}
\begin{array}{cc}
1 & \varphi(a,y)\in p,\\
0 & \text{otherwise.}
\end{array}\end{cases}
\end{equation*}
Moreover, we let $\mathbb{F}_M^{\varphi} = \{F_{a}^{\varphi} : a \in M\}$. 
\end{definition}

\begin{remark} It is clear that for each $a$ in $M^{x}$ the map $F^{\varphi}_{a}$ is continuous. We will always view $F_{a}^{\varphi}$ as a map from $S_{y}(M)$ to $\mathbb{R}$. We let $\conv(\mathbb{F}^{\varphi}_{M})$ be the convex hull of $\mathbb{F}^{\varphi}_M$ in $C(S_{y}(M))$. 
\end{remark}

\begin{definition}[Representative Sequence] Let $f \in \conv_{\mathbb{Q}}(\mathbb{F}_{M}^{\varphi})$. A \textit{representative sequence} for $f$ is a sequence of points $a_1,...,a_m \in M^{x}$  so that for every $b \in \mathcal{U}^y$, $\Av(\overline{a})(\varphi(x;b)) = f(tp(b/M))$. 
\end{definition}

\begin{remark} Every element $f$ in $\conv_{\mathbb{Q}}(\mathbb{F}_{M}^{\varphi})$ has many representative sequences. If $f \in \conv_{\mathbb{Q}}(\mathbb{F}_{M}^{\varphi})$, then $f = \sum_{i=1}^n r_iF_{c_i}^{\varphi}$ where each $c_i$ is in $M^{x}$, $r_i \in \mathbb{Q}^+$, and $\sum_{i=1}^n r_i = 1$. Let $m$ be the smallest number so that for every $i \leq n$, $m \cdot r_i \in \mathbb{N}$. Let
\begin{equation*}
    \mathbf{a}_i = \underbrace{c_i,...,c_i}_{m\cdot r_i \text{ times}}
\end{equation*}
Then, the concatenation of the $\mathbf{a}_i$'s is a representative sequence for $f$. 
\end{remark}

Finally, we end the introduction with an important fact about continuous functions on Stone spaces. This result is folklore and we provide a proof for the sake completeness. 

\begin{fact}\label{tdc} Let $S$ be a totally disconnected compact Hausdorff space. Then, a map $f:S \to [0,1]$ is continuous if and only if for every $\epsilon > 0$, there exists a collection of clopen sets $\mathcal{P} = \{C_1,...,C_m\}$ such that $\mathcal{P}$ forms a partition of $S$ and for each $i$ if $b,b' \in C_i$, then $|f(b) - f(b')| < \epsilon$.
\end{fact}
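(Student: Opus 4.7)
The plan is to prove the two directions of this biconditional separately, with the reverse direction being essentially a one-line unraveling of the definition of continuity, and the forward direction using that clopen sets form a basis for the topology on $S$ together with the compactness of $S$.

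For the reverse direction, I would fix $b \in S$ and $\epsilon > 0$, and apply the hypothesis to obtain a finite clopen partition $\{C_1, \ldots, C_m\}$ refining the $\epsilon$-oscillation condition. The unique $C_i$ containing $b$ is then an open neighborhood of $b$ on which $f$ takes values within $\epsilon$ of $f(b)$; since $[0,1]$ carries the standard metric topology, this is exactly continuity at $b$.

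For the forward direction, fix $\epsilon > 0$ and cover $[0,1]$ by finitely many open intervals $U_1, \ldots, U_k$, each of diameter less than $\epsilon$. Continuity of $f$ gives that each $f^{-1}(U_j)$ is open in $S$. Because $S$ is totally disconnected compact Hausdorff (hence a Stone space), the clopen subsets form a basis for the topology; so for every $b \in S$ I can choose a clopen neighborhood $V_b$ of $b$ contained in some $f^{-1}(U_{j(b)})$. Compactness of $S$ yields a finite subcover $V_{b_1}, \ldots, V_{b_n}$. To convert this cover into a partition, I would disjointify it in the usual way by setting
\begin{equation*}
    C_i = V_{b_i} \setminus \bigcup_{l < i} V_{b_l},
\end{equation*}
observing that finite Boolean combinations of clopens are clopen, discarding any empty pieces, and noting that each surviving $C_i$ is contained in some $f^{-1}(U_{j(b_i)})$, so that any two points of $C_i$ have $f$-values within $\epsilon$ of each other.

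There is no real obstacle here; the only step that requires a small amount of care is the appeal to clopen sets forming a basis, which is the defining feature of a Stone space and is what makes totally disconnected compact Hausdorff spaces the right setting for this fact. Everything else is a routine compactness argument combined with the standard disjointification trick.
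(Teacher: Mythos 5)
Your proof is correct and follows essentially the same route as the paper's: the reverse direction is the routine unraveling of continuity at a point, and the forward direction covers $[0,1]$ by small intervals, pulls back, uses that clopens form a basis together with compactness to get a finite clopen cover, and then disjointifies. The only cosmetic difference is in the disjointification step — you use the linear set-difference trick $C_i = V_{b_i} \setminus \bigcup_{l<i} V_{b_l}$, while the paper takes the atoms of the Boolean algebra generated by the finite clopen cover; both yield a finite clopen partition refining the original cover, so the oscillation bound carries over identically.
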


\begin{proof} First, we prove the forward direction. Assume that $f:S \to [0,1]$ is continuous. Let $B = \{B_{\epsilon_i}: i \leq n\}$ be a finite collection of open intervals of length $\epsilon$ which cover $[0,1]$. Then, $f^{-1}(B_{\epsilon_{i}}) = U_i$. Then,  $U_i= \bigcup_{j \in J_{i}} C_{i_j}$ where each $C_{i_j}$ is clopen. Now,
\begin{equation*}\bigcup_{i=1}^n f^{-1}(B_{\epsilon_i}) = \bigcup_{i =1}^n \bigcup_{j \in J_i} C_{i_j}
\end{equation*}is an open cover of $S$. So, $\bigcup_{k =1}^{m} C_k$ for some $k$'s in  $\{i_j: i \leq n, j \in J_i\}$ is a finite subcover. If $b,b' \in C_k$, then $f(b),f(b') \in B_{\epsilon_i}$ and so $|f(b)- f(b')|<\epsilon$. Choosing the atoms of the Boolean algebra generated by $\{C_k: k \leq m\}$ gives us a partition. 

Now, the other direction. We need to show that $f$ is continuous. Let $B_{\epsilon}$ be an open interval of length $\epsilon$. We want to show that $f^{-1}(B_{\epsilon})$ is an open set. If $f^{-1}(B_{\epsilon}) = \emptyset$, then we are finished. Assume that $p \in f^{-1}(B_{\epsilon})$. Notice that $f(p) \in B_{\epsilon}$. Choose $\delta$ such that $(f(p) - \delta, f(p) + \delta) \subset B_{\epsilon}$. Let $\mathcal{P} = \{C_1,...,C_m\}$ form a partition for $\delta$ and assume that $p \in C_p$. Then, $f(C_p) \subseteq (f(p) - \delta, f(p) + \delta)$ and so $C_p \subset f^{-1}(B_{\epsilon})$. Repeating this process for each $p \in f^{-1}(B_{\epsilon})$, we conclude that $f^{-1}(B_{\epsilon}) = \bigcup_{p \in B_{\epsilon}} C_p$. Therefore, $f^{-1}(B_{\epsilon})$ is open. 
\end{proof}

\section{Dependence}
The purpose of this section is to connect the notions of VC-dimension, NIP formulas, and sequential dependence. In particular, we show that if $\varphi(x;y)$ is NIP, then $\conv(\mathbb{F}_{M}^{\varphi})$, as a subset of $\mathbb{R}^{S_{y}(M)}$, is sequentially dependent. This result, Theorem \ref{sd}, follows implicitly by results in \cite{B}. This section gives a direct proof of this theorem. We begin by fixing some notation and recalling a quick fact about NIP formulas. 

\begin{definition} Let $\varphi(x;y)$ be a partitioned formula. Let $\mathcal{F}_\varphi$ be the family of definable subsets of $\mathcal{U}^x$ of the form $\{\varphi(x;b):b \in \mathcal{U}^y\}$. Likewise, we let $\mathcal{F}_{\varphi^*} = \{\varphi(a,y): a \in \mathcal{U}^x\}$.
\end{definition}

\subsection{VC Theory} We now continue by recalling some basic VC theory. The purpose of this subsection is to clearly state the VC theorem. 

\begin{definition}[VC-Dimension] Let $X$ be a set and let $\mathcal{F}$ be a family of subsets of $X$. Then, the \textit{VC-dimension of $\mathcal{F}$} is the largest $n$ such that there exists $A \subset X$, $|A| = n$, and,
\begin{equation*}
     \{K \subset A : \exists F \in \mathcal{F} \text{ so that } F \cap A = K\} = \mathcal{P}(A).
\end{equation*}
We denote the VC-dimension of $\mathcal{F}$ as $dim_{VC}(\mathcal{F})$. If no such $n$ exists, then we say $dim_{VC}(\mathcal{F}) = \infty$. 
\end{definition}

\begin{fact} If $\varphi(x;y)$ is NIP then $\mathcal{F}_{\varphi}$ and $\mathcal{F}_{\varphi^*}$ have finite VC-dimension. 
\end{fact}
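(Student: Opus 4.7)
The plan is to recognize that this fact is essentially a direct translation between two phrasings of the same combinatorial condition. First I would unpack both definitions on the same set: given $A \subseteq \mathcal{U}^{x}$, shattering $A$ by $\mathcal{F}_{\varphi}$ in the VC sense requires that every $K \subseteq A$ equals $F \cap A$ for some $F \in \mathcal{F}_{\varphi}$; since the members of $\mathcal{F}_{\varphi}$ are the sets $\{c \in \mathcal{U}^{x} : \mathcal{U} \models \varphi(c;b)\}$ for $b \in \mathcal{U}^{y}$, this is verbatim the witnessing condition appearing in the definition of IP applied to $A$. Thus $A$ is shattered by $\mathcal{F}_{\varphi}$ if and only if $A$ witnesses the IP condition at cardinality $|A|$.

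From here I would finish the first part in one line. If $\varphi(x;y)$ is NIP, then by definition there is no sequence of finite $A \subseteq \mathcal{U}^{x}$ of arbitrarily large cardinality witnessing IP; equivalently, there is an integer $N$ such that no $A \subseteq \mathcal{U}^{x}$ with $|A|=N$ is shattered by $\mathcal{F}_{\varphi}$, so $\dim_{VC}(\mathcal{F}_{\varphi}) < N$. The contrapositive direction is handled the same way and is not needed here.

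For $\mathcal{F}_{\varphi^{*}}$, I would appeal directly to the basic fact recalled in the preliminaries, namely that if $\varphi(x;y)$ is NIP then so is $\varphi^{*}(x;y)$. Applying what we just proved to $\varphi^{*}$ in place of $\varphi$ yields $\dim_{VC}(\mathcal{F}_{\varphi^{*}}) < \infty$, completing the proof.

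There is no real obstacle: the only thing to check carefully is that the definition of IP in the paper and the definition of VC-shattering are matched up on the same side (object variables $x$ go with the set being shattered, parameter variables $y$ with the family of sets), and that the quantifier structure "for every $n$ there exists $A$ with $|A| \geq n$" negates correctly to "there exists $N$ such that no $A$ with $|A|=N$ is shattered." Once one accepts that translation, the fact is immediate, and the $\varphi^{*}$ case reduces to the symmetry of NIP under switching variables, which is already in the toolkit established in the preliminaries.
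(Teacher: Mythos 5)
The paper states this as a recalled fact without supplying a proof, so there is no argument in the paper to compare against; your argument is the standard one and it is correct. You correctly identify that shattering $A$ by $\mathcal{F}_{\varphi}$ in the VC sense is, verbatim from the paper's definitions, the same as $A$ witnessing the IP condition (object variables $x$ parameterize the shattered set, parameter variables $y$ parameterize the family), so NIP gives a uniform bound on the size of shattered sets; and the $\mathcal{F}_{\varphi^{*}}$ case follows from the invariance of NIP under swapping the roles of $x$ and $y$, which the paper has already stated in the preliminaries.
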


\begin{definition}[$\epsilon$-Approximations] Let $X$ be a set, $\mathcal{F}$ be a family of subsets of $X$, and $\mu$ be a measure on $\mathcal{P}(X)$. Then, we say that a sequence of elements $a_1,...,a_n$ in $X$ is a \textit{$\epsilon$-approximation for $\mu$ over $\mathcal{F}$} if for every $F \in \mathcal{F}$, $|\mu(F) - \Av(\bar{a})(F)|<\epsilon$.
\end{definition}

Now, we may state the VC Theorem. 

\begin{theorem}[\cite{VC}] Let $X$ be a set and $\mathcal{F}$ a family of subsets of $X$. Assume that $\mathcal{F}$ has finite VC-dimension. Let $\mu$ be a probability measure concentrating on a finite subset of $X$, i.e. $\mu(A) = 1$ for some finite set $A$. Then, for every $\epsilon > 0$, there exists some constant $C_{\epsilon,d}$, depending only on $\epsilon$ and $d=dim_{VC}(\mathcal{F})$, and a sequence of points $c_1,...,c_m$ in $X$ such that $m \leq C_{d,\epsilon}$ and $c_1,...,c_m$ is an $\epsilon$-approximation of $\mu$ over $\mathcal{F}$. 
\end{theorem}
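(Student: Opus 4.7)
The plan is to prove this by a random sampling argument: draw $m$ points $c_1,\ldots,c_m$ independently from $\mu$ and show that with strictly positive probability the resulting sample is an $\epsilon$-approximation of $\mu$ over $\mathcal{F}$. Once $m = C_{d,\epsilon}$ is chosen large enough, positivity of the probability yields, by a union-of-complements argument, the existence of a concrete sequence. Because $\mu$ concentrates on a finite set $A \subseteq X$, there are no measurability subtleties: effectively we are working in the finite probability space $(A,\mu)$, and $\mathcal{F}|_A$ is a finite family, so the supremum $\sup_{F\in\mathcal{F}}|\mu(F)-\operatorname{Av}(\bar c)(F)|$ is actually a maximum over finitely many values.

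The first step is a symmetrization (double-sample trick). Let $c_1,\ldots,c_m$ be the primary sample with empirical measure $\nu$, and let $c_1',\ldots,c_m'$ be an independent ghost sample with empirical measure $\nu'$. Using Chebyshev on the ghost sample, one shows that for $m \gtrsim 1/\epsilon^2$,
\[
\Pr\Bigl[\sup_{F\in\mathcal{F}} |\mu(F)-\nu(F)| > \epsilon\Bigr]
\;\leq\; 2\,\Pr\Bigl[\sup_{F\in\mathcal{F}} |\nu(F)-\nu'(F)| > \epsilon/2\Bigr].
\]
Next, introduce i.i.d.\ Rademacher signs $\sigma_1,\ldots,\sigma_m$; because the joint law of $((c_i,c_i'))_i$ is invariant under swapping coordinates, one rewrites the right-hand side as
\[
\Pr\Bigl[\sup_{F\in\mathcal{F}} \Bigl|\tfrac{1}{m}\textstyle\sum_{i=1}^m \sigma_i\bigl(\mathbf{1}_F(c_i)-\mathbf{1}_F(c_i')\bigr)\Bigr| > \epsilon/2\Bigr].
\]

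The key combinatorial step is now to condition on the $2m$ points and invoke the Sauer--Shelah lemma: since $\dim_{VC}(\mathcal{F})=d$, the number of distinct traces $F\cap\{c_1,\ldots,c_m,c_1',\ldots,c_m'\}$ as $F$ ranges over $\mathcal{F}$ is at most $\binom{\leq d}{2m}\leq (2em/d)^d$. For each fixed trace, the inner probability over the Rademacher signs is bounded by Hoeffding's inequality by an exponential $2\exp(-m\epsilon^2/8)$. A union bound over the at most $(2em/d)^d$ traces, followed by undoing the symmetrization, gives
\[
\Pr\Bigl[\sup_{F\in\mathcal{F}} |\mu(F)-\nu(F)| > \epsilon\Bigr] \;\leq\; 4\bigl(2em/d\bigr)^d \exp(-m\epsilon^2/32).
\]
Choose $C_{d,\epsilon}$ as any integer $m$ making the right-hand side strictly less than $1$; since the exponential dominates the polynomial in $m$, such a choice exists and depends only on $d$ and $\epsilon$. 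Then some realization $c_1,\ldots,c_m$ satisfies $|\mu(F)-\operatorname{Av}(\bar c)(F)|<\epsilon$ for every $F\in\mathcal{F}$, as required.

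The main obstacle is the passage from a continuous supremum over the potentially infinite family $\mathcal{F}$ to a finite union bound; this is precisely what Sauer--Shelah provides, and it is the only place the finite VC-dimension hypothesis is used. The other delicate point is tracking constants so that $C_{d,\epsilon}$ depends only on $d$ and $\epsilon$, which is automatic from the inequality above once one solves for $m$ in closed form.
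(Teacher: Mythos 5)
The paper does not prove this theorem; it is stated as a cited classical result from Vapnik--Chervonenkis and used as a black box, so there is no ``paper's own proof'' to compare against. Your argument is the standard textbook proof of the VC theorem: symmetrization via a ghost sample and Chebyshev, introduction of Rademacher signs by swap-invariance, reduction to finitely many traces via the Sauer--Shelah lemma, a Hoeffding bound on each trace, and a union bound, with the polynomial-versus-exponential trade-off giving a bound $C_{d,\epsilon}$ depending only on $d$ and $\epsilon$. This is correct in outline and is essentially the proof found in the cited source and in standard references. One small caveat: the constants you display (e.g.\ $\exp(-m\epsilon^2/32)$ after undoing symmetrization, and the implicit threshold $m\gtrsim 1/\epsilon^2$ in the Chebyshev step) need to be tracked consistently through the $\epsilon\mapsto\epsilon/2\mapsto\epsilon/4$ splits, but since the statement only requires \emph{some} $C_{d,\epsilon}$, this is a bookkeeping matter rather than a gap. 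Also note that the hypothesis that $\mu$ is concentrated on a finite set, which you correctly observe removes all measurability issues, is exactly why the paper can state this cleanly without invoking any regularity assumptions on $\mathcal{F}$; your proof uses that observation appropriately.
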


\begin{remark}\label{rem} We notice that if $\mathcal{F}$ is a family of subsets of $X$ with VC-dimension $d$, then for every $\epsilon > 0$ and for every probability measure $\mu$ concentrating on a finite set, there exists some $K_{d,\epsilon}$ depending only of $d$ and $\epsilon$ and a sequence of points $a_1,...,a_m$ in $X$ where $m = K_{d,\epsilon}$ such that $a_1,...,a_m$ is an $\epsilon$-approximation of $\mu$ over $\mathcal{F}$. In other words, for any measure concentrating on a finite set, we can find an $\epsilon$-approximation of \textbf{exactly} length $K_{d,\epsilon}$, e.g. we may take $K_{d,\epsilon} = C_{d,\epsilon}!$ since one can just concatenate sequences of length less than $C_{d,\epsilon}$ with themselves many times. 
\end{remark}

\subsection{Continuous VC Classes} We will explain dependence in the continuous context and explain how this relates to sequential dependence. 

\begin{definition}[Shattering] Let $X,Y$ be sets, $f: X \times Y \to [0,1]$, $r \in (0,1)$ and $\epsilon > 0$. Let $A \subset X$. We say that $f$ $(r,\epsilon)$-shatters $A$ if for every $K \subseteq A$, there exists some $b_K$ in $Y$ so that
\begin{equation*}
    \{a \in X: f(a,b_K) \leq r\} \cap A = K,
\end{equation*}
and,
\begin{equation*}
    \{a \in X: f(a,b_K) \geq r+ \epsilon\} \cap A = A \backslash K.
\end{equation*}
Moreover, we say that $B$ witnesses the $(r,\epsilon)$-shattering of $A$ if for each $K\subseteq A$, $B$ contains $b_K$ as above. 
\end{definition}

The following definition of dependence is equivalent to the one given in \cite{B}. 

\begin{definition}[Dependence]\label{dep} Let $X,Y$ be sets and let $f:X \times Y \to [0,1]$. We say that $f$ is $(r,\epsilon)$\textit{-independent} if every $n \in \mathbb{N}$, there exists $A \subset X$ where $|A| > n$ and $f$ $(r,\epsilon)$-shatters $A$. We say that $f$ is \textit{independent} if there exists some $r \in (0,1)$ and $\epsilon >0$ so that $f$ is $(r,\epsilon)$-independent. Finally, we say that $f$ is
\textit{dependent} if $\varphi$ is not independent. 
\end{definition}

\begin{fact}\label{D} If $f:X \times Y \to [0,1]$ is dependent, then $\mathbf{X} = \{f(a,y):a \in X\} \subset \mathbb{R}^{Y}$ is sequentially dependent.  
\end{fact}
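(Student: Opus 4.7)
The plan is to prove the contrapositive: assuming $\mathbf{X}$ is sequentially independent, I will show that $f$ is independent in the sense of Definition \ref{dep}. This is essentially a matter of unpacking both definitions and matching the shattering data.

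Fix a witness to sequential independence: a sequence $(f_n)_{n \in \mathbb{N}}$ in $\mathbf{X}$, a real $r$, and a positive $\epsilon$ such that for every $I \subseteq \mathbb{N}$ there exists $b_I \in Y$ with $\{n : f_n(b_I) \leq r\} = I$ and $\{n : f_n(b_I) \geq r + \epsilon\} = \mathbb{N} \setminus I$. Write $f_n(y) = f(a_n,y)$ for some $a_n \in X$. The $a_n$ must be pairwise distinct, since $a_n = a_m$ would force $f_n = f_m$, contradicting that the two displayed sets vary as $I$ ranges over all subsets of $\mathbb{N}$. Moreover, since $f$ has range $[0,1]$, taking $I = \mathbb{N}$ forces $r \geq 0$ (else the first set is always empty) and taking $I = \emptyset$ forces $r + \epsilon \leq 1$. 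Replacing $(r, \epsilon)$ by $(r + \epsilon/3,\, \epsilon/3)$ preserves the shattering conditions and ensures $r \in (0,1)$, matching the parameter restriction in Definition \ref{dep}.

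Now fix any $N \in \mathbb{N}$ and set $A_N = \{a_1, \ldots, a_N\} \subseteq X$, which has cardinality $N$. For each $K \subseteq A_N$, put $I_K = \{n \leq N : a_n \in K\}$ and take the witness $b_{I_K} \in Y$ provided by sequential independence. For $1 \leq n \leq N$ we have $a_n \in K \iff n \in I_K \iff f(a_n, b_{I_K}) \leq r$, and similarly $a_n \notin K \iff f(a_n, b_{I_K}) \geq r + \epsilon$. Hence
\[
\{a \in X : f(a, b_{I_K}) \leq r\} \cap A_N = K, \qquad \{a \in X : f(a, b_{I_K}) \geq r + \epsilon\} \cap A_N = A_N \setminus K,
\]
so $f$ $(r, \epsilon)$-shatters $A_N$. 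Since $N$ was arbitrary, $f$ is $(r, \epsilon)$-independent, and therefore independent.

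There is no substantive obstacle: the content of the fact is essentially a definition-chase, and the only point requiring a moment's care is adjusting the parameter $r$ from the real line into the open interval $(0,1)$ using that the range of $f$ lies in $[0,1]$.
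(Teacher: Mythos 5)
Your proof is correct, and it takes the same approach as the paper, which simply says the fact follows directly from the definitions; you have unpacked that definition-chase in full detail, including the two points that genuinely need a moment's thought (the pairwise distinctness of the $a_n$, and nudging $r$ into the open interval $(0,1)$ using that $f$ takes values in $[0,1]$).
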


\begin{proof} This follows directly from the definitions. 
\end{proof}

\subsection{Space of Dirac Measures} We prove the theorem advertised in the introduction of this section, namely Theorem \ref{sd}. We begin by discussing certain kinds of measures. 

We recall that if $X$ is a set and $a \in X$, then a Dirac measure $\delta_{a}$ on $\mathcal{P}(X)$ is a measure where for any $A$ in $\mathcal{P}(X)$, we have that $\delta_{a}(A) = 1$ if $a \in A$ and $\delta_{a}(A) = 0$ if $a \not \in A$. 

\begin{definition}[Convex Hull of Dirac Measures] Let $X$ be a set. Let $\delta(X)$ be the the collection of Dirac measures over $X$. Then, $\conv(\delta(X))$ is the convex hull of Dirac measures over $X$, i.e.
\begin{equation*}
\conv(\delta(X)) = \left\{\sum^n_{i=1}r_i\delta_{x_i}: n \in \mathbb{N},  x_i \in X,  r_i \in \mathbb{R}^+,  \sum_{i=1}^nr_i=1\right\}.
\end{equation*}
We will write $\conv(\delta(X))$ simply as $\conv(X)$. 
\end{definition}

\begin{definition} Let $f:X \times Y \to \{0,1\}$. Then, we define the map $\Psi_f: \conv(X) \times \conv(Y) \to [0,1]$ via
\begin{equation*}
    \Psi_f(\mu,\nu) = \int f(x,y)d\mu d\nu = \sum_{i=1}^n \sum_{j=1}^m r_i s_jf(a_i,b_j),
\end{equation*}
where $\mu = \sum_{i=1}^n r_i \delta_{a_i}$ and $\nu = \sum_{j=1}^m s_j \delta_{b_j}$. 
\end{definition}

\begin{theorem}\label{CVC} Assume that $\varphi(x;y)$ is an NIP formula. Consider $f: \mathcal{U}^x \times \mathcal{U}^y \to \{0,1\}$ via, 
\begin{equation*}
    f(a,b) =\begin{cases}
\begin{array}{cc}
1 & \mathcal{U}\models\varphi(a,b),\\
0 & otherwise.
\end{array}\end{cases}
\end{equation*} Then the map $\Psi_f: \conv(\mathcal{U}^x) \times \conv(\mathcal{U}^y) \to [0,1]$ is dependent. 
\end{theorem}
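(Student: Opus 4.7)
The plan is to argue by contradiction: assuming $\Psi_f$ is $(r,\epsilon)$-independent, I will use the VC Theorem to convert the measure-theoretic shattering into a first-order shattering witnessed by a Boolean combination of instances of $\varphi$, which must itself be NIP---a contradiction.

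Concretely, fix $n$ and pick $\mu_1,\ldots,\mu_n \in \conv(\mathcal{U}^x)$ together with witnesses $\{\nu_K\}_{K\subseteq[n]}\subset\conv(\mathcal{U}^y)$ realizing the $(r,\epsilon)$-shattering. Set $\epsilon' := \epsilon/8$. Since $\varphi$ is NIP, the families $\mathcal{F}_\varphi$ and $\mathcal{F}_{\varphi^*}$ have finite VC-dimensions $d$ and $d^*$, and each $\mu_i, \nu_K$ concentrates on a finite set. Invoking the VC Theorem in the form of Remark \ref{rem}, I may fix lengths $m' = K_{d,\epsilon'}$ and $m = K_{d^*,\epsilon'}$ independent of $i$ and $K$, and choose sequences $\overline{a}^i \in (\mathcal{U}^x)^{m'}$, $\overline{b}^K \in (\mathcal{U}^y)^m$ whose uniform averages $\tilde\mu_i := \Av(\overline{a}^i)$ and $\tilde\nu_K := \Av(\overline{b}^K)$ are uniformly $\epsilon'$-close to $\mu_i$ on $\mathcal{F}_\varphi$ and to $\nu_K$ on $\mathcal{F}_{\varphi^*}$ respectively. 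A Fubini-type triangle inequality, bounding $|\Psi_f(\tilde\mu_i,\tilde\nu_K)-\Psi_f(\mu_i,\nu_K)|$ through the intermediate quantity $\Psi_f(\tilde\mu_i,\nu_K)$ and the two approximation estimates, then yields $|\Psi_f(\tilde\mu_i,\tilde\nu_K)-\Psi_f(\mu_i,\nu_K)|<\epsilon/4$ uniformly in $i$ and $K$.

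The next step is to encode the resulting threshold separation as a first-order formula. In the partitioned variables $\overline{x}=(x_1,\ldots,x_{m'})$ and $\overline{y}=(y_1,\ldots,y_m)$, let $\psi(\overline{x};\overline{y})$ be the formula stating that more than a fraction $r+\epsilon/2$ of the pairs $(j,k)\in[m']\times[m]$ satisfy $\varphi(x_j,y_k)$; this is an explicit finite disjunction of conjunctions of literals in $\varphi(x_j,y_k)$. Each $\varphi(x_j,y_k)$, regarded as a formula in the larger partitioned tuples $(\overline{x};\overline{y})$, is NIP (adding dummy variables preserves NIP), hence $\psi$ is a Boolean combination of NIP formulas and so itself NIP. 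Since $\Psi_f(\tilde\mu_i,\tilde\nu_K)=\frac{1}{mm'}|\{(j,k):\varphi(a_j^i,b_k^K)\}|$, the estimate of the previous paragraph forces $\psi(\overline{a}^i;\overline{b}^K)$ to hold exactly when $i\notin K$.

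Running $K$ through the complements of arbitrary subsets of $[n]$ then shows that $\psi$ shatters $\{\overline{a}^1,\ldots,\overline{a}^n\}$: the tuples $\overline{a}^i$ are pairwise distinct, since two equal ones would force some $\psi$-pattern to be simultaneously true and false. As $n$ was arbitrary, $\psi$ has infinite VC-dimension, contradicting the NIP of $\psi$. The subtlest point I anticipate is the VC reduction: I need the approximating sequences to have common lengths $m, m'$ depending only on $d, d^*, \epsilon$ and not on $i$ or $K$, so that a single formula $\psi$ captures the shattering for every $n$ simultaneously---this is exactly what Remark \ref{rem}, rather than the bare VC Theorem, provides. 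The rest of the argument is combinatorial bookkeeping.
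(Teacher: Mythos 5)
Your proposal is correct and follows essentially the same route as the paper's proof: use the VC theorem (via Remark \ref{rem}) to replace each measure in the shattered family and each witnessing measure by a fixed-length finite average, then encode the resulting gap around a threshold as a Boolean combination of instances of $\varphi$ which would then be IP, contradicting the closure of NIP under Boolean combinations. The differences (your threshold $r+\epsilon/2$ and $\epsilon'=\epsilon/8$ versus the paper's shifted $r_0,\epsilon_0$ and the smallest admissible $w/(n_\star m_\star)$, plus your explicit remark that the approximating tuples are pairwise distinct) are presentational bookkeeping, not a change of method.
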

\begin{proof} Assume that the map $\Psi_f$ is independent. We will find a Boolean combination of $\varphi$ which is not NIP. Since $\Psi_f$ is independent, $\Psi_f$ is $(r_0,\epsilon_0)$-independent for some $r_0$ in $(0,1)$ and $\epsilon_0 > 0$. Let $r = r_0 + \frac{\epsilon_0}{3}$, $\epsilon = \frac{\epsilon_0}{3}$, and $\delta = \frac{\epsilon}{6} = \frac{\epsilon_0}{18}$. Then $\Psi_f$ is $(r,\epsilon)$-independent because $[r,r+\epsilon] \subset [r_0,r_0 + \epsilon_0]$. Now, $d_1 =dim_{VC}(\mathcal{F}_{\varphi})$ and $d_2 =dim_{VC}(\mathcal{F}_{\varphi^*})$ where $d_1,d_2 < \infty$ since $\varphi(x;y)$ is NIP. Let $n_{\star} = K_{\delta,d_1}$ and let $m_{\star} = K_{\delta,d_2}$ described in Remark \ref{rem}. Choose the smallest $w \in \{1,2,...,n_{\star} m_{\star}\}$ such that $\frac{w}{n_{\star}m_{\star}} \geq r$. Let $n_{\star} \times m_{\star} =\{(i,j): i \leq n_{\star}, j\leq m_{\star}\}$ and let $W = [n_{\star}\times m_{\star}]^w$, i.e. the subsets of $n_{\star}\times m_{\star}$ of size $|w|$. Now, consider the formula: 
\begin{equation*}
    \theta(x_1,...,x_{n_{\star}},y_1,...,y_{m_{\star}}) \equiv \neg \bigvee_{\alpha \in W} \bigwedge_{(i,j) \in \alpha} \varphi(x_i,y_j)
\end{equation*}
Notice that $\theta(\overline{x},\overline{y})$ takes in two sequences of elements and decides whether $\varphi(x;y)$ holds on a certain proportion of pairs of elements. In other words, if $a_1,...,a_{n_{\star}}$ is a sequence in $\mathcal{U}^{x}$ and $b_1,...,b_{m_{\star}}$ is a sequence in $\mathcal{U}^{y}$, then $\theta(\overline{a},\overline{b})$ determines whether $\Psi_{f}(\Av(\overline{a}),\Av(\overline{b}))$ is greater than $r$. 

We will now show that $\theta$ is not NIP. Assume that $\Psi_f$ $(r_0,\epsilon_0)$-shatters $A$. Then, for each $K \subseteq A$, there exists $\nu_K$ in $\conv(\mathcal{U}^{y})$ so that,
\begin{equation*}
    \{\mu \in \conv(\mathcal{U}^{x}): \Psi_f(\mu,\nu_K) \leq r\} \cap A = K,
\end{equation*}
and,
\begin{equation*}
    \{\mu \in \conv(\mathcal{U}^x): \Psi_F(\mu,\nu_K) \geq r+ \epsilon\} \cap A = A \backslash K.
\end{equation*}
 Then, for each $\mu \in A$, let $a_1^{\mu},...,a_{n_{\star}}^{\mu}$ be a $\delta$-approximation for that particular $\mu$ over $\mathcal{F}_{\varphi}$ of length exactly $n_{\star}$. Moreover, for each $K \subset A$, let $b^K_1,...,b^K_{m_{\star}}$ be a $\delta$-approximation for $\nu_K$ over $\mathcal{F}_{\varphi^*}$ of length exactly $m_{\star}$. Let $A_{\star} =\{(a_{1}^{\mu},...,a_{n_{\star}}^{\mu}): \mu \in A\} \subset \mathcal{U}^{l(x) \cdot n_{\star}}$ and $B_{\star} = \{(b_1^{K},...,b_{m_{\star}}^{K}): K \subset A\} \subset \mathcal{U}^{l(y)\cdot m_{\star}}$.  Then
\begin{equation*}
    |\Psi_f(\mu,\nu_K) - \Psi_f (\Av(\overline{a^{\mu}}),\Av(\overline{b^{K}}))| < 2\delta =  \frac{\epsilon}{3}.
\end{equation*}

Now, if $(a_1^{\mu},...,a_{n_{\star}}^{\mu})$ is in $A_{\star}$ and $(b_1^{K},...,b_{m_{\star}}^{K})$ is in $B_{\star}$, then by a standard computation we notice that,
\begin{equation*}
\mathcal{U} \models \theta(\overline{a^{\mu}},\overline{b^{K}}) \iff \Psi_{f}(Av(\overline{a^{\mu}}),Av(\overline{b^{K}})) \leq r  \iff  \Psi_{f}(\mu,\nu_{k}) \leq r_{0},
\end{equation*}
as well as, 
\begin{equation*}
\mathcal{U} \models \neg \theta(\overline{a^{\mu}},\overline{b^{K}}) \iff \Psi_{f}(Av(\overline{a^{\mu}}),Av(\overline{b^{K}})) \geq r + \epsilon  \iff  \Psi_{f}(\mu,\nu_{k}) \geq r_{0} + \epsilon. 
\end{equation*}
We conclude that $\theta$ is not NIP. However, by the fact stated in the introduction, $\varphi$ is also not NIP. Therefore, we have a contradiction. 
\end{proof}

Recall that $\mathbb{F}^{\varphi}_{M} = \{F_{a}^{\varphi} : a \in M\}$ as defined in Definition \ref{F}. 

\begin{corollary}\label{E} The map $\Eval: \conv(\mathbb{F}_{M}^{\varphi}) \times S_{y}(M) \to [0,1]$ where $\Eval(f,p) = f(p)$ is dependent. 
\end{corollary}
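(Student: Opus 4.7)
The plan is to reduce Corollary \ref{E} directly to Theorem \ref{CVC} by embedding $\conv(\mathbb{F}_{M}^{\varphi})$ into $\conv(\mathcal{U}^{x})$ as convex combinations of Dirac measures on $M^{x}$, and by replacing each type $p \in S_{y}(M)$ by a Dirac measure at an arbitrary realization $b \in \mathcal{U}^{y}$ of $p$.

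The first step is the compatibility identity: for $a \in M^{x}$, $p \in S_{y}(M)$, and any $b \in \mathcal{U}^{y}$ realizing $p$, one has $F_{a}^{\varphi}(p) = 1 \iff \varphi(a,y) \in p \iff \mathcal{U} \models \varphi(a,b) \iff f(a,b) = 1$, hence $F_{a}^{\varphi}(p) = f(a,b)$. Define $\iota : \conv(\mathbb{F}_{M}^{\varphi}) \to \conv(\mathcal{U}^{x})$ by fixing any representation $\mu = \sum_{i} r_{i} F_{a_{i}}^{\varphi}$ with $a_{i} \in M^{x}$ and setting $\iota(\mu) = \sum_{i} r_{i} \delta_{a_{i}}$. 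Then for any $p \in S_{y}(M)$ and any realization $b$ of $p$,
\[
\Eval(\mu, p) = \sum_{i} r_{i} F_{a_{i}}^{\varphi}(p) = \sum_{i} r_{i} f(a_{i}, b) = \Psi_{f}(\iota(\mu), \delta_{b}).
\]
Moreover $\iota$ is injective, since $\iota(\mu)$ determines the values $\mu(\varphi(x;b))$ for every $b \in \mathcal{U}^{y}$, which in turn determine $\mu$ as an element of $\conv(\mathbb{F}_{M}^{\varphi})$.

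With this translation in place, I would argue by contradiction. Suppose $\Eval$ is $(r,\epsilon)$-independent. Then for every $n$ there exists $A \subseteq \conv(\mathbb{F}_{M}^{\varphi})$ with $|A| > n$ that is $(r,\epsilon)$-shattered: for each $K \subseteq A$ there is some $p_{K} \in S_{y}(M)$ realizing the shattering conditions. Pick $b_{K} \in \mathcal{U}^{y}$ realizing $p_{K}$, and set $\tilde{A} = \iota(A)$, so $|\tilde{A}| = |A| > n$ by injectivity. The identity above gives
\[
\Psi_{f}(\iota(\mu), \delta_{b_{K}}) \leq r \iff \mu(p_{K}) \leq r \iff \mu \in K,
\]
and symmetrically for the $\geq r+\epsilon$ clause. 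Thus $\tilde{A}$ is $(r,\epsilon)$-shattered by $\Psi_{f}$ with witnesses $\{\delta_{b_{K}} : K \subseteq A\} \subseteq \conv(\mathcal{U}^{y})$. Letting $n$ vary, $\Psi_{f}$ is $(r,\epsilon)$-independent, contradicting Theorem \ref{CVC}.

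There is essentially no obstacle; the argument is a matter of carefully unwinding the definitions and tracking the embedding $\iota$. The only point worth highlighting is that the witnesses used on the $\Psi_{f}$ side are Dirac measures $\delta_{b_{K}}$ rather than general elements of $\conv(\mathcal{U}^{y})$, but this is perfectly compatible with Definition \ref{dep}, which only requires the witnesses to lie in $\conv(\mathcal{U}^{y})$.
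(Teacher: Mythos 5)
Your proof is correct and follows essentially the same route as the paper: both reduce to Theorem~\ref{CVC} by sending $f = \sum_i r_i F^{\varphi}_{a_i}$ to $\mu_f = \sum_i r_i \delta_{a_i} \in \conv(\mathcal{U}^x)$, replacing each type $p_K$ by a Dirac measure at a realization $b_K$, and observing that $\Psi_f(\mu_f,\delta_{b_K}) = \Eval(f,p_K)$ so the shattering transfers. The only genuine addition is your explicit injectivity remark about the embedding, which the paper leaves implicit (and which is in any case automatic on any shattered set, since the shattering forces the functions in $A$ to be pairwise distinguished by some $p_K$).
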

\begin{proof}
Assume not. Then $\Eval$ is $(r,\epsilon)$-independent for some $r$ and $\epsilon$. Assume that $A \subseteq \conv(\mathbb{F}_{M}^{\varphi})$ is $(r,\epsilon)$-shattered. Then, for each subset $K$ of  $A$, there exists $p_K$ in $S_{y}(M)$ where $\{f \in \conv(\mathbb{F}_{M}^{\varphi}): f(p_K) \leq r\} \cap A = K$ and $\{f \in \conv(\mathbb{F}_{M}^{\varphi}): f(p_K) \geq r + \epsilon \} \cap A = A \backslash K$. 
Since $f$ is in $\conv(\mathbb{F}_{M}^{\varphi})$, we note that $f = \sum_{i=1}^n r_iF_{a_i}^{\varphi}$ for some $a_i$ in $M^{x}$ and $r_i > 0$. Let $\mu_f = \sum_{i=1}^n r_i \delta_{a_i}$. For each $K \subset A$, we let $b_K \in \mathcal{U}^{y}$ be a realization of $p_K$. Let $A_{\star} = \{\mu_f: f \in A\} \subset \conv(\mathcal{U}^x)$. Then, for any $\mu_{f} \in A_{\star}$, we have,
\begin{equation*}
\Psi_f(\mu_f,\delta_{b_K}) = \Eval(f,tp(b_K/M)) = \Eval(f,p_K). 
\end{equation*} 
Therefore if $\Eval$ is independent then so is the map $\Phi_{f}$. Since $\varphi(x;y)$ is NIP, this contradicts Theorem \ref{CVC}. 
\end{proof}

\begin{theorem}\label{sd} Assume that $\varphi(x;y)$ is an NIP formula. Then $\conv(\mathbb{F}_{M}^{\varphi}) \subseteq \mathbb{R}^{S_{y}(M)}$ is sequentially dependent.  
\end{theorem}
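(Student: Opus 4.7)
The plan is to obtain Theorem \ref{sd} as an essentially immediate consequence of Corollary \ref{E} combined with Fact \ref{D}. The hard work has already been done in establishing that $\Eval \colon \conv(\mathbb{F}_M^\varphi) \times S_y(M) \to [0,1]$ is dependent (via the VC-theoretic discretization argument in Theorem \ref{CVC}), so the only remaining task is to identify the right set $\mathbf{X}$ in the sense of Fact \ref{D} and verify that the two notions of (in)dependence match up on the nose.

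First I would set $X = \conv(\mathbb{F}_M^\varphi)$ and $Y = S_y(M)$ and consider the map $f = \Eval$. For any $g \in X$, the partial application $\Eval(g, \cdot) \colon S_y(M) \to \mathbb{R}$ is, by definition of $\Eval$, just the function $g$ itself. Therefore the set $\mathbf{X} = \{\Eval(g, \cdot) : g \in X\} \subseteq \mathbb{R}^{S_y(M)}$ coincides literally with $\conv(\mathbb{F}_M^\varphi)$ as a subset of $\mathbb{R}^{S_y(M)}$.

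Next I would invoke Corollary \ref{E}, which (using that $\varphi$ is NIP) gives that $\Eval$ is dependent in the sense of Definition \ref{dep}. Applying Fact \ref{D} to $f = \Eval$ then yields that the set $\mathbf{X} = \conv(\mathbb{F}_M^\varphi)$ is sequentially dependent in $\mathbb{R}^{S_y(M)}$, which is exactly the conclusion of Theorem \ref{sd}.

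I do not anticipate any real obstacle: the potentially subtle step — translating the continuous VC independence of $\Psi_f$ down to independence of $\Eval$ and then across to sequential independence — has been packaged into Fact \ref{D} and Corollary \ref{E}. The only thing to be careful about is confirming that the $r$ and $\epsilon$ witnessing independence of $\Eval$ transfer unchanged to witnesses for sequential independence of $\conv(\mathbb{F}_M^\varphi)$; but this is built into Fact \ref{D} since, given an infinite $(r,\epsilon)$-shattered $A \subseteq \conv(\mathbb{F}_M^\varphi)$, one extracts a countable subsequence $(g_n)_{n \in \mathbb{N}} \subseteq A$ and uses the shattering witnesses for the subsets of $\{g_n : n \in \mathbb{N}\}$ (a family of types $p_I \in S_y(M)$, $I \subseteq \mathbb{N}$) as the required $b_I$'s in the definition of sequential independence.
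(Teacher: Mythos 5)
Your proposal matches the paper's proof exactly: the paper's Theorem \ref{sd} is proved in one line by combining Corollary \ref{E} with Fact \ref{D}, which is precisely the argument you lay out, with the helpful observation made explicit that $\Eval(g,\cdot)=g$ so that the $\mathbf{X}$ of Fact \ref{D} is literally $\conv(\mathbb{F}_M^\varphi)$. Your closing aside about extracting a countable subsequence from an infinite shattered set actually describes the converse implication (sequential independence from independence), but since you are invoking Fact \ref{D} as a black box in the same way the paper does, this does not affect the correctness of the argument.
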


\begin{proof} This follows directly from Corollary \ref{E} and Fact \ref{D}. 
\end{proof}

\section{Local Keisler Measures} 
We recall that $\mathcal{U}$ is a monster model of $T$, $M$ is a small elementary substructure of $\mathcal{U}$, and $\varphi(x;y)$ is a partitioned $L$-formula. We do \textbf{not} require $\varphi(x;y)$ to be NIP unless explicitly stated otherwise. In the first subsection, we define the weak notion of \textit{$\varphi$-generic stability} (Definition \ref{definitions}). Assuming that $\varphi(x;y)$ is NIP, we will show that if $\mu$ is in $\mathfrak{M}_{\varphi}(\mathcal{U})$ and $\mu$ is $\varphi$-generically stable over some $M$ where $M$ is countable, then for every $\epsilon > 0$, there exists a sequence of points $a_1,...,a_n$ of elements in $M^x$ such that this sequance is an $\epsilon$-approximation for $\mu$ over $\mathcal{F}_{\varphi}$ (Theorem \ref{mt1}). In the second subsection, we will then use Theorem \ref{mt1} to prove our main result described in the introduction. Some of the proofs in this section are standard but are provided for the sake of completeness. We begin with some definitions. 

\subsection{Basic properties of Local Keisler Measures} 

\begin{definition}Let $\mu \in \mathfrak{M}_{\varphi}(\mathcal{U})$. 
\begin{enumerate}[($i$)]
\item $\mu$ is $M$\textit{-invariant} if for every $\varphi$-formula $\psi(x;b)$ and every automorphism $\sigma \in \Aut(\mathcal{U}/M)$, we have $\mu(\psi(x;b)) = \mu(\psi(x;\sigma(b)))$. 
\item $\mu$ is $\varphi$\textit{-invariant over $M$} if for every $b,c \in \mathcal{U}^y$ such that $tp(b/M) = tp(c/M)$, we have that $\mu(\varphi(x;b)) = \mu(\varphi(x;c))$. 
\end{enumerate} 
\end{definition} 

Since $M$ is assumed to be small, the collection of measures which are $\varphi$-invariant over $M$ contains the collection of $M$-invariant measures. Notice that $\varphi$-invariance only mentions instances of $\varphi$ and does not mention Boolean combinations of $\varphi$. Our next definition connects $\varphi$-invariant measures over $M$ to functions from $S_{y}(M)$ to $\mathbb{R}$. 

\begin{definition}\label{functions} Assume that $\mu \in \mathfrak{M}_{\varphi}(\mathcal{U})$ and $\mu$ is $\varphi$-invariant over $M$. Then, we define the function $F^{\varphi}_{\mu}:S_{y}(M) \to [0,1]$ via $F_{\mu}^{\varphi}(p) = \mu(\varphi(x;b))$ where $b \models p$. 
\end{definition}

Definition \ref{functions} allows us to transfer problems involving finitely additive measures to questions involving functions. We will soon see that special kinds of measures correspond to special kinds of functions. Let us now describe these special measures.    

\begin{definition}[Properties of Measures]\label{definitions} Let $\mu \in \mathfrak{M}_{\varphi}(\mathcal{U})$.
\begin{enumerate}[($i$)]
    \item We say that $\mu$ is \textit{$\varphi$-definable over $M$} if for every $\epsilon > 0$ there exist $L$-formulas $\psi_1(y),...,\psi_n(y)$ with parameters only from $M$, such that:
\begin{enumerate}
    \item The collection $\{\psi_i(y): i \leq m\}$ forms a partition of $\mathcal{U}^{y}$. 
    \item If $\mathcal{U} \models \psi_i(e)\wedge \psi_i(c)$, then $|\mu(\varphi(x;e)) - \mu(\varphi(x;c))| < \epsilon$. 
\end{enumerate}
\item We say that $\mu$ is \textit{finitely satisfiable over $M$} if for every $\varphi$-formula $\psi(x)$ such that $\mu(\psi(x)) > 0$, there exists some $a$ in $M^{x}$ such that $\mathcal{U} \models \psi(a)$. 
\item We say that $\mu$ is \textit{ $\varphi$-generically stable over $M$} if $\mu$ is both $\varphi$-definable and finitely satisfiable over $M$. 
\end{enumerate}
\end{definition}

We now connect the kinds of measures defined above with functions from $S_{y}(M)$ to $\mathbb{R}$.

\begin{proposition}\label{disc} Let $\mu \in \mathfrak{M}_{\varphi}(\mathcal{U})$. Then $\mu$ is $\varphi$-definable over $M$ if and only if $\mu$ is $\varphi$-invariant and the map $F^{\varphi}_{\mu}:S_{y}(M) \to [0,1]$ is continuous. 
\end{proposition}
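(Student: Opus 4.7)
The plan is to use the bijective correspondence between clopen subsets of the Stone space $S_{y}(M)$ and $L$-formulas in $y$ with parameters from $M$, combined with Fact \ref{tdc}, to translate between the combinatorial data in the definition of $\varphi$-definability and the topological continuity of $F_{\mu}^{\varphi}$.

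For the forward direction, assume $\mu$ is $\varphi$-definable over $M$. First I would verify that $\mu$ is $\varphi$-invariant over $M$: given $b,c \in \mathcal{U}^{y}$ with $tp(b/M) = tp(c/M)$, they satisfy the same $L$-formulas over $M$, so they lie in the same part of every partition $\{\psi_{1}(y),\ldots,\psi_{n}(y)\}$ supplied by $\varphi$-definability. Hence $|\mu(\varphi(x;b)) - \mu(\varphi(x;c))| < \epsilon$ for every $\epsilon > 0$, forcing equality and making $F_{\mu}^{\varphi}$ well defined. Next, to see that $F_{\mu}^{\varphi}$ is continuous, I would apply the criterion in Fact \ref{tdc} to the totally disconnected compact Hausdorff space $S_{y}(M)$. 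For $\epsilon > 0$, the formulas $\psi_{i}(y)$ given by $\varphi$-definability induce clopen subsets $[\psi_{i}(y)] \subseteq S_{y}(M)$; these form a partition of $S_{y}(M)$ because the $\psi_{i}$ partition $\mathcal{U}^{y}$ and every type is complete and realizable. For $p, p' \in [\psi_{i}(y)]$, with realizations $b \models p$ and $b' \models p'$, both $b$ and $b'$ satisfy $\psi_{i}$, so
\begin{equation*}
|F_{\mu}^{\varphi}(p) - F_{\mu}^{\varphi}(p')| = |\mu(\varphi(x;b)) - \mu(\varphi(x;b'))| < \epsilon,
\end{equation*}
which is exactly the hypothesis required by Fact \ref{tdc}.

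For the reverse direction, assume $\mu$ is $\varphi$-invariant over $M$ and $F_{\mu}^{\varphi}$ is continuous. Given $\epsilon > 0$, Fact \ref{tdc} supplies a clopen partition $\{C_{1},\ldots,C_{m}\}$ of $S_{y}(M)$ on which $F_{\mu}^{\varphi}$ oscillates by less than $\epsilon$. Each $C_{i}$ is of the form $[\psi_{i}(y)]$ for some $L$-formula $\psi_{i}(y)$ over $M$, and the $\psi_{i}$ then partition $\mathcal{U}^{y}$. If $\mathcal{U} \models \psi_{i}(e) \wedge \psi_{i}(c)$, then $tp(e/M)$ and $tp(c/M)$ both lie in $C_{i}$, and combining the small oscillation of $F_{\mu}^{\varphi}$ on $C_{i}$ with $\varphi$-invariance yields $|\mu(\varphi(x;e)) - \mu(\varphi(x;c))| < \epsilon$, which is the definition of $\varphi$-definability.

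I do not anticipate a serious obstacle: the proposition is essentially a dictionary entry translating continuity into the combinatorial condition via the clopen/formula correspondence, and Fact \ref{tdc} is tailored to make the translation go through in both directions. The only subtle point worth spelling out is that a finite partition of $\mathcal{U}^{y}$ by $L$-formulas $\psi_{i}(y)$ over $M$ corresponds exactly to a clopen partition of $S_{y}(M)$ by the sets $[\psi_{i}(y)]$, which is immediate from the fact that every $p \in S_{y}(M)$ is complete and has a realization in $\mathcal{U}^{y}$.
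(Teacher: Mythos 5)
Your proof is correct and follows the same route the paper takes, just fully spelled out: the paper's proof is the single sentence ``This follows from Fact \ref{tdc} and the assumption that $M$ is small,'' and your argument is precisely the clopen/formula dictionary that sentence is gesturing at, including the point (which is where smallness of $M$ enters) that every type in $S_y(M)$ is realized in $\mathcal{U}$ so clopen partitions of $S_y(M)$ correspond to partitions of $\mathcal{U}^y$.
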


\begin{proof} This follows from Fact \ref{tdc} and the assumption that $M$ is small. 
\end{proof}

\begin{proposition}\label{fswd} If $\mu \in \mathfrak{M}_{\varphi}(\mathcal{U})$ and $\mu$ is finitely satisfiable over $M$, then $\mu$ is $M$-invariant, and in partiuclar, $\mu$ is $\varphi$-invariant. 
\end{proposition}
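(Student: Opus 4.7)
The plan is to first establish the stronger statement, $M$-invariance, by a symmetric-difference argument that uses finite satisfiability together with the fact that automorphisms in $\Aut(\mathcal{U}/M)$ fix witnesses from $M$. Once $M$-invariance is proven, $\varphi$-invariance over $M$ will follow immediately from the saturation of $\mathcal{U}$.

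So I fix $\sigma \in \Aut(\mathcal{U}/M)$ and an arbitrary $\varphi$-formula $\chi(x)$ with parameters $b_1,\dots,b_k \in \mathcal{U}$, writing $\sigma \cdot \chi$ for the $\varphi$-formula obtained by replacing every $b_i$ by $\sigma(b_i)$. The target is $\mu(\chi) = \mu(\sigma\cdot \chi)$. By finite additivity,
\begin{equation*}
\mu(\chi) - \mu(\sigma\cdot \chi) \;=\; \mu\bigl(\chi \wedge \neg(\sigma\cdot \chi)\bigr) \;-\; \mu\bigl(\neg \chi \wedge (\sigma\cdot \chi)\bigr),
\end{equation*}
so it suffices to show both symmetric-difference terms vanish. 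I focus on the first; the second is symmetric, obtained by rerunning the argument with $\sigma^{-1}$ in place of $\sigma$. Suppose for contradiction that $\mu(\chi \wedge \neg(\sigma\cdot \chi)) > 0$. Since Boolean combinations of $\varphi$-formulas are again $\varphi$-formulas, this formula lies in $\Def_\varphi(\mathcal{U})$, so finite satisfiability over $M$ produces $a \in M^x$ with $\mathcal{U} \models \chi(a) \wedge \neg(\sigma\cdot \chi)(a)$. Because $\sigma$ fixes $M$ pointwise, $\sigma(a)=a$; applying the automorphism $\sigma$ to $\mathcal{U}\models \chi(a)$ then yields $\mathcal{U}\models (\sigma\cdot \chi)(a)$, directly contradicting the second conjunct. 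Hence $\mu(\chi \wedge \neg(\sigma\cdot \chi))=0$, giving $M$-invariance.

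For the final clause, if $b,c \in \mathcal{U}^y$ satisfy $tp(b/M) = tp(c/M)$, then since $M$ is small in the monster $\mathcal{U}$ there is $\sigma \in \Aut(\mathcal{U}/M)$ with $\sigma(b) = c$, and $M$-invariance applied to $\psi(x;y) = \varphi(x;y)$ yields $\mu(\varphi(x;b)) = \mu(\varphi(x;\sigma(b))) = \mu(\varphi(x;c))$. No real obstacle arises; the only subtlety is keeping track that the symmetric-difference formula $\chi \wedge \neg(\sigma\cdot \chi)$ is a legitimate $\varphi$-formula (so that both finite satisfiability and $\mu$ apply to it), and that the contradiction is drawn at a single element $a$ for which $\sigma(a) = a$.
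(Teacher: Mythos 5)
Your proof is correct and takes essentially the same route as the paper: both use finite satisfiability to extract a witness $a \in M^x$ inside the symmetric difference and then exploit that $\sigma$ fixes $M$ pointwise to derive the contradiction (the paper phrases this as $tp(b/M)\neq tp(\sigma(b)/M)$, you phrase it by applying $\sigma$ directly to $\chi(a)$, which is the same observation). The only cosmetic difference is that you make the two-sided symmetric-difference decomposition explicit where the paper uses a one-sided ``assume $>$'' reduction.
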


\begin{proof} The proof is identical to the standard Keisler measure statement \cite{guide}, but we provide it for the sake of clarity. Assume that $\psi(x;b)$ is a $\varphi$-formula, $\sigma \in \Aut(\mathcal{U}/M)$ and $\mu(\psi(x;b)) > \mu(\psi(x;\sigma(b)))$. Then, $\mu(\psi(x;b) \wedge  \neg \psi(x;\sigma(b)) > 0$. Since $\mu$ is finitely satisfiable over $M$, there exists some $a$ in $M^{x}$ such that $\mathcal{U} \models \psi(a,b) \wedge \neg \psi(a,\sigma(b))$. Then, $tp(b/M) \neq tp(\sigma(b)/M)$ and we have a contradiction. 
\end{proof}

Recall that if $A \subset \mathbb{R}^X$, then $cl(A)$ is the topological closure of $A$ in $\mathbb{R}^X$. We will now connect this closure property with finite satisfiability. 

\begin{proposition}\label{fslp} Let $\mu \in \mathfrak{M}_{\varphi}(\mathcal{U})$ and assume that $\mu$ is finitely satisfiable over $M$. Let $X = S_{y}(M)$. Then $F_{\mu}^{\varphi} \in cl(\conv_{\mathbb{Q}}(\mathbb{F}^{\varphi}_M)) \subset \mathbb{R}^X$. 
\end{proposition}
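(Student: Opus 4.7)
The plan is to unwind what it means for $F_\mu^\varphi$ to lie in the product-topology closure of $\conv_{\mathbb{Q}}(\mathbb{F}_M^\varphi) \subseteq \mathbb{R}^{S_y(M)}$. A basic open neighborhood of $F_\mu^\varphi$ is determined by finitely many types $p_1, \dots, p_n \in S_y(M)$ together with some $\epsilon > 0$, and consists of all $g$ with $|g(p_i) - F_\mu^\varphi(p_i)| < \epsilon$ for every $i$. So given such a neighborhood, it suffices to produce a rational convex combination $g = \sum_l s_l F_{a_l}^\varphi$ with each $a_l \in M^x$ that $\epsilon$-approximates $F_\mu^\varphi$ at the points $p_i$.

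Fix realizations $b_i \models p_i$, so that $F_\mu^\varphi(p_i) = \mu(\varphi(x; b_i))$. I would then consider the finite Boolean subalgebra of $\Def_\varphi(\mathcal{U})$ generated by $\varphi(x; b_1), \dots, \varphi(x; b_n)$, and enumerate its nonempty atoms as $A_1, \dots, A_k$; each atom is itself a $\varphi$-formula, and the atoms partition $\mathcal{U}^x$, so $\sum_l \mu(A_l) = 1$. Put $I = \{l : \mu(A_l) > 0\}$. This is precisely the step that uses finite satisfiability: for each $l \in I$, the $\varphi$-formula $A_l$ has positive $\mu$-measure, hence contains some point $a_l \in M^x$. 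Since $A_l$ is an atom and each $\varphi(x; b_i)$ is a union of atoms, one has $F_{a_l}^\varphi(p_i) = 1$ if and only if $A_l \subseteq \varphi(x; b_i)$.

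To close the argument, choose positive rationals $(s_l)_{l \in I}$ summing to $1$ with $|s_l - \mu(A_l)| < \epsilon/k$ for each $l$; this is possible by density of rational probability vectors in the finite simplex. Setting $g := \sum_{l \in I} s_l F_{a_l}^\varphi$, the function $g$ lies in $\conv_{\mathbb{Q}}(\mathbb{F}_M^\varphi)$, and for every $i$ the two parallel identities
$$g(p_i) = \sum_{l \in I,\, A_l \subseteq \varphi(x; b_i)} s_l, \qquad \mu(\varphi(x; b_i)) = \sum_{l \in I,\, A_l \subseteq \varphi(x; b_i)} \mu(A_l)$$
yield $|g(p_i) - F_\mu^\varphi(p_i)| \leq \sum_{l \in I} |s_l - \mu(A_l)| < \epsilon$, as required.

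I do not anticipate any serious obstacle here: the argument is a direct reduction to picking one representative $a_l \in M^x$ inside each positive-measure atom of a finite Boolean algebra of $\varphi$-formulas, which is exactly what finite satisfiability provides, combined with elementary rational approximation on the probability simplex. Note also that $F_\mu^\varphi$ is well-defined at the outset because finite satisfiability implies $\varphi$-invariance via Proposition \ref{fswd}, so no separate hypothesis of invariance is needed.
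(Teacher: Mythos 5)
Your argument is correct and is essentially the paper's own proof: you realize a basic open neighborhood by finitely many types, realize them by $b_i$, take the atoms of the Boolean algebra generated by $\varphi(x;b_1),\dots,\varphi(x;b_n)$, use finite satisfiability to pick a witness in $M^x$ inside each positive-measure atom, and approximate $\mu$ by the resulting convex combination of the $F_{a_l}^\varphi$. The only cosmetic difference is that you carry out the rational approximation of the weights by hand (noting that density of the rational simplex suffices), whereas the paper first exhibits an element of $\conv(\mathbb{F}_M^\varphi)$ that lands exactly in the neighborhood and then invokes density of $\conv_{\mathbb{Q}}(\mathbb{F}_M^\varphi)$ in $\conv(\mathbb{F}_M^\varphi)$ as a separate step.
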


\begin{proof}
By Fact \ref{fswd}, the map $F_{\mu}^{\varphi}$ is well defined. Let $U$ be some open subset of $\mathbb{R}^{X}$ containing $F^{\varphi}_\mu$. Without loss of generality,
\begin{equation*}U = \bigcap_{i=1}^n \{f \in \mathbb{R}^X: r_i < f(q_i) < s_i\}.
\end{equation*}

Choose some $b_i \models q_i$ for each $q_i$. Then, we have that $r_i<\mu(\varphi(x;b_i))<s_i$. We notice that $\varphi(x;b_1),...,\varphi(x;b_n)$ generate a finite Boolean algebra. Let $\theta_1(x),...,\theta_m(x)$ be the atoms of this Boolean algebra. If $\mu(\theta_i(x)) >0$ then by finite satisfiability we can find $a_i$ in $M^{x}$ such that $\mathcal{U} \models \theta_i(a_i)$. Define

\begin{equation*}
f(y) = \sum_{\{i: \mu(\theta_i(x)) > 0\}} \mu(\theta_i(x))F^{\varphi}_{a_i}(y).
\end{equation*} 

\noindent Then, $f \in \conv(\mathbb{F}_{M}^{\varphi}) \cap U$. Since $\conv_{\mathbb{Q}}(\mathbb{F}^{\varphi}_{M})$ is a dense subset of $\conv(\mathbb{F}^{\varphi}_{M})$ with the induced topology from $\mathbb{R}^X$, there exists some $g \in \conv_{\mathbb{Q}}(\mathbb{F}^{\varphi}_{M}) \cap U$. 
\end{proof}

\begin{lemma}\label{ML} Assume that $\varphi(x;y)$ is NIP. Let $\mu \in \mathfrak{M}_{\varphi}(\mathcal{U})$. Assume that $\mu$ is finitely satisfiable over $M$ and $|M|=\aleph_0$. Then there exists some sequence $(f_i)_{i \in \mathbb{N}}$ of elements in $\conv_{\mathbb{Q}}(\mathbb{F}^{\varphi}_M)$ such that $f_i \to F^{\varphi}_{\mu}$. 
\end{lemma}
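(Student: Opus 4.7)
The plan is to apply the Bourgain--Fremlin--Talagrand theorem (Theorem \ref{equiv}) directly with $X = S_{y}(M)$ and $A = \conv_{\mathbb{Q}}(\mathbb{F}^{\varphi}_M)$. Everything in this lemma has been arranged so that all of the hypotheses of Theorem \ref{equiv} can be verified by combining what has already been proved.

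First I would check that $A$ satisfies the structural hypotheses of Theorem \ref{equiv}. Since $M$ is countable, $\mathbb{F}^{\varphi}_M$ is countable, and therefore the set of rational convex combinations $A = \conv_{\mathbb{Q}}(\mathbb{F}^{\varphi}_M)$ is also countable. Each generator $F^{\varphi}_{a}$ is continuous on the compact Hausdorff space $S_{y}(M)$ (as noted after Definition \ref{F}), so every element of $A$ lies in $C(S_{y}(M))$. Because each $F^{\varphi}_{a}$ takes values in $\{0,1\}$, every element of $A$ takes values in $[0,1]$, so $\sup_{f \in A} \|f\|_{\infty} \le 1 < \infty$.

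Next I would locate $F^{\varphi}_{\mu}$ in $cl(A)$ and verify the dependence hypothesis. By Proposition \ref{fswd}, the finitely satisfiable measure $\mu$ is $\varphi$-invariant over $M$, so the function $F^{\varphi}_{\mu}$ is well-defined, and Proposition \ref{fslp} gives $F^{\varphi}_{\mu} \in cl(\conv_{\mathbb{Q}}(\mathbb{F}^{\varphi}_M)) = cl(A)$. For dependence: Theorem \ref{sd} shows that $\conv(\mathbb{F}^{\varphi}_M)$ is sequentially dependent in $\mathbb{R}^{S_{y}(M)}$, and since $A \subseteq \conv(\mathbb{F}^{\varphi}_M)$, any sequence from $A$ witnessing sequential independence would also witness sequential independence of the larger set; hence $A$ is sequentially dependent as well.

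Having verified all hypotheses, Theorem \ref{equiv} applied with this $A$ and with $f = F^{\varphi}_{\mu} \in cl(A)$ produces the desired sequence $(f_i)_{i \in \mathbb{N}}$ of elements of $A = \conv_{\mathbb{Q}}(\mathbb{F}^{\varphi}_M)$ with $f_i \to F^{\varphi}_{\mu}$. There is no serious obstacle here: the work was front-loaded into Theorem \ref{sd} (using NIP of $\varphi$) and Proposition \ref{fslp} (using finite satisfiability), while countability of $M$ is what makes $A$ countable so that Theorem \ref{equiv} applies. The only point that merits a sentence of care is the observation that sequential dependence is inherited by subsets, since the definition only asks for a witnessing sequence inside the set.
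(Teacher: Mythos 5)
Your proposal is correct and takes essentially the same route as the paper: Proposition \ref{fslp} places $F^{\varphi}_{\mu}$ in the closure, Theorem \ref{sd} (via NIP) supplies sequential dependence, and countability of $M$ lets Theorem \ref{equiv} apply. You are a bit more careful than the paper in spelling out the auxiliary hypotheses of Theorem \ref{equiv} (boundedness, continuity, and the fact that sequential dependence passes to the subset $\conv_{\mathbb{Q}}(\mathbb{F}^{\varphi}_M)$ of $\conv(\mathbb{F}^{\varphi}_M)$), all of which are correct observations.
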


\begin{proof} By Proposition \ref{fslp}, $F_{\mu}^{\varphi}$ is in $cl(\conv_{\mathbb{Q}}(F_{M}^{\varphi}))$. Since $\varphi(x;y)$ is NIP, Theorem \ref{sd} holds. Since $|\conv_{\mathbb{Q}}(\mathbb{F}_{M}^{\varphi})| = \aleph_0$, statement $(i)$ of Theorem \ref{equiv} is satisfied. Therefore statement $(ii)$ of Theorem \ref{equiv} holds, so there exists a sequence of elements $f_i \in \conv(\mathbb{F}_{M}^{\varphi})$ such that $f_i \to F_{\mu}^{\varphi}$. 
\end{proof}

We now prove the main theorem of this section. 

\begin{theorem}\label{mt1} Assume that $\varphi(x;y)$ is NIP. Let $\mu \in \mathfrak{M}_{\varphi}(\mathcal{U})$. Assume that $\mu$ is $\varphi$-generically stable over $M$ and $|M| = \aleph_0$. Then for every $\epsilon > 0$, there exists a sequence  $a_1,...,a_n$ of elements in $M^{x}$ such that, 
\begin{equation*}
    |\mu(\varphi(x;b)) - \Av(\overline{a})(\varphi(x;b))| < \epsilon.
\end{equation*}
\end{theorem}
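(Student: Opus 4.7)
The plan is to combine the three functional-analytic tools from Section 2 with Lemma \ref{ML} to promote pointwise convergence all the way up to uniform convergence, then read off an approximating sequence from a single rational convex combination.

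First I would use finite satisfiability together with NIP and countability of $M$ to apply Lemma \ref{ML}, yielding a sequence $(f_i)_{i \in \mathbb{N}}$ in $\conv_{\mathbb{Q}}(\mathbb{F}^{\varphi}_M)$ with $f_i \to F^{\varphi}_\mu$ pointwise on $X := S_y(M)$. Next I would use $\varphi$-definability of $\mu$: by Proposition \ref{disc}, $F^{\varphi}_\mu$ is a continuous function on $X$. Each $f_i$ is continuous as well, being a finite convex combination of the continuous functions $F^{\varphi}_a$, and every such function takes values in $[0,1]$, so $\sup_i \|f_i\|_\infty \leq 1$. Since $X$ is compact Hausdorff, the hypotheses of Theorem \ref{K} are met, and we conclude $f_i \to_w F^{\varphi}_\mu$ in the Banach space $C(X)$.

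With weak convergence in hand, I would invoke Mazur's Lemma (Lemma \ref{GR}) to extract a sequence $h_i \in \conv_{\mathbb{Q}}(\{f_j : j \in \mathbb{N}\})$ with $\|h_i - F^{\varphi}_\mu\|_\infty \to 0$. A rational convex combination of elements of $\conv_{\mathbb{Q}}(\mathbb{F}^{\varphi}_M)$ is again in $\conv_{\mathbb{Q}}(\mathbb{F}^{\varphi}_M)$, so each $h_i$ lies in $\conv_{\mathbb{Q}}(\mathbb{F}^{\varphi}_M)$. Given $\epsilon > 0$, pick $N$ with $\|h_N - F^{\varphi}_\mu\|_\infty < \epsilon$, and let $a_1, \ldots, a_n \in M^x$ be a representative sequence for $h_N$ (as produced in the remark following the definition of representative sequence). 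Then for every $b \in \mathcal{U}^y$,
\begin{equation*}
\Av(\overline{a})(\varphi(x;b)) = h_N(tp(b/M)) \quad \text{and} \quad \mu(\varphi(x;b)) = F^{\varphi}_\mu(tp(b/M)),
\end{equation*}
so $|\Av(\overline{a})(\varphi(x;b)) - \mu(\varphi(x;b))| \leq \|h_N - F^{\varphi}_\mu\|_\infty < \epsilon$, which is exactly the desired uniform approximation.

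The main obstacle is really the passage from pointwise to uniform convergence; the individual pieces are routine once one checks that $\varphi$-definability gives continuity of the limit function $F^{\varphi}_\mu$ (so Theorem \ref{K} applies) and that finite satisfiability was already used in Lemma \ref{ML} to put $F^{\varphi}_\mu$ into the closure of $\conv_{\mathbb{Q}}(\mathbb{F}^{\varphi}_M)$. Both halves of $\varphi$-generic stability are therefore essential: finite satisfiability delivers the pointwise approximating sequence via Bourgain--Fremlin--Talagrand, and $\varphi$-definability is what lets Mazur's Lemma upgrade it, via weak convergence, to a uniformly approximating single rational convex combination.
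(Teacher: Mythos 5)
Your proposal is correct and follows essentially the same route as the paper: Lemma \ref{ML} for pointwise convergence, Proposition \ref{disc} for continuity of $F^{\varphi}_{\mu}$, Theorem \ref{K} to upgrade to weak convergence, Mazur's Lemma for uniform convergence by rational convex combinations, and then a representative sequence. You even supply a small detail the paper elides, namely that rational convex combinations of elements of $\conv_{\mathbb{Q}}(\mathbb{F}^{\varphi}_M)$ remain in $\conv_{\mathbb{Q}}(\mathbb{F}^{\varphi}_M)$, so Mazur's Lemma really does land you back in the right set.
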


\begin{proof} Fix $\epsilon$. By Lemma \ref{ML}, there exists a sequence $(f_i)_{i \in \mathbb{N}}$ of elements in $\conv_{\mathbb{Q}}(\mathbb{F}_{M}^{\varphi})$ such that $f_i \to F_{\mu}^{\varphi}$. By Fact \ref{disc}, we know that $F_{\mu}^{\varphi}$ is continuous. Since each $f_i$ is also continuous, we may apply Theorem \ref{K}. Therefore, $f_i \to_{w} F_{\mu}^{\varphi}$. By Mazur's Lemma,
there exists a sequence $g_i \in \conv_{\mathbb{Q}}(\mathbb{F}_{M}^{\varphi})$ such that $g_i \to_u F_{\mu}^{\varphi}$. Choose $g_n$ such that
\begin{equation*}
    \sup_{q \in S_{y}(M)}|F_{\mu}^\varphi(q) - g_n(q)| < \epsilon.
\end{equation*}
Notice that $g_n = \sum_{i=1}^n r_iF^{\varphi}_{c_i}$ and let $a_1,...,a_n$ be a representative sequence for $g_n$. Then for each $b \in \mathcal{U}^{y}$, 
\begin{equation*}
    |\mu(\varphi(x;b)) - \Av(\overline{a})(\varphi(x;b))| = |F_{\mu}^{\varphi}(tp(b/M)) - g_n(tp(b/M))|.
\end{equation*}
Since $M$ is small, we may conclude that
\begin{equation*}
    \sup_{b \in \mathcal{U}}|\mu(\varphi(x;b)) - \Av(\bar{a})(\varphi(x;b))| 
    = \sup_{q \in S_{y}(M)}|F_{\mu}^\varphi(q) - g_n(q)|<\epsilon. 
\end{equation*}
\end{proof} 
\subsection{Main Result}
The purpose of this section is to prove our Main Theorem as described in the introduction. We again do \textbf{not} require $\varphi(x;y)$ to be NIP in this section unless explicitly stated.
We will see that if we strengthen our definition of \textit{$\varphi$-definability} and in turn our definition of \textit{$\varphi$-generic stability}, we can prove our main result. We begin this section by considering different families of $\varphi$-definable sets.

\begin{definition} For a fixed partitioned $L$-formula $\varphi(x;y)$, we define $\Delta_{\varphi}$ as the Boolean algebra of partitioned formulas generated by $\{\varphi(x;y_i): i \in \mathbb{N}, l(y_i) = l(y)\}$. If $\theta(x;\overline{y})$ is an element of $\Delta_{\varphi}$, we let $\Def_{\theta}(\mathcal{U})$ be the Boolean subalgebra of $\Def_{x}(\mathcal{U})$ generated by $\{\theta(x;\overline{b}): \overline{b} \in \mathcal{U}^{\overline{y}}\}$. Moreover, we let $I_{\theta}$ denote the obvious restriction map from $\mathfrak{M}_{\varphi}(\mathcal{U})$ to $\mathfrak{M}_{\theta}(\mathcal{U})$. For notational purposes, if $\mu \in \mathfrak{M}_{\varphi}(\mathcal{U})$ and $\theta(x;\overline{y})$ is in $\Delta_{\varphi}$, then we write $I_{\theta}(\mu)$ simply as $\mu_{\theta}$.
\end{definition}

For example, the formulas $\varphi(x;y_1) \wedge \varphi(x;y_2)$, $\varphi(x;y_1) \triangle \varphi(x;y_2)$, and $\bigvee_{i=1}^{14}\varphi(x;y_i)$ are all elements of $\Delta_{\varphi}$. 

We now give the definition generic stability and finite approximability as well as some relations between the properties we have already defined. 

\begin{definition}\label{cont} Let $\mu \in \mathfrak{M}_{\varphi}(\mathcal{U})$. Then $\mu$ is \textit{definable over $M$} if for every $\theta(x;\overline{y})$ in $\Delta_{\varphi}$ and $\epsilon >0$ there exist $L$-formulas $\rho_1(\overline{y})$,...,$\rho_m(\overline{y})$ with parameters only from $M$, such that: 
\begin{enumerate}[($i$)] 
\item The collection $\{\rho_i(\overline{y}): i \leq m\}$ forms a partition of $\mathcal{U}^{\overline{y}}$. 
\item if $\mathcal{U} \models \rho_i(\overline{e}) \wedge \rho_i(\overline{c})$, then $|\mu(\theta(x;\overline{e})) - \mu(\theta(x;\overline{c}))|<\epsilon$.
\end{enumerate} 
We say that $\mu$ is \textit{generically stable over $M$} if $\mu$ is definable and finitely satisfiable over $M$. 
\end{definition}

\begin{definition}\label{finap} Let $\mu \in \mathfrak{M}_{\varphi}(\mathcal{U})$. Then, $\mu$ is \textit{finitely approximable} if for every $\theta(x;\overline{y})$ in $\Delta_{\varphi}$ and for every $\epsilon > 0$, there exists a sequence $a_1,...,a_n$ in $M^{x}$ such that for any $b \in \mathcal{U}^{\overline{y}}$
    \begin{equation*}
        |\mu(\theta(x;\overline{b})) - \Av(\overline{a})(\theta(x;\overline{b}))| < \epsilon. 
    \end{equation*}
\end{definition}

\begin{proposition}[Basic Properties]\label{equiv2} Let $\mu \in \mathfrak{M}_{\varphi}(\mathcal{U})$. 
\begin{enumerate}[($i$)]
    \item  $\mu$ is $M$-invariant if and only if for every $\theta(x;\overline{y})$ in $\Delta_{\varphi}$, $\mu_{\theta}$ is $\theta$-invariant over $M$. 
    \item $\mu$ is definable over $M$ if and only if for every $\theta(x;\overline{y})$ in $\Delta_{\varphi}$, the measure $\mu_{\theta} \in \mathfrak{M}_{\theta}(\mathcal{U})$ is $\theta$-definable over $M$. 
    \item $\mu$ is definable over $M$ if and only if $\mu$ is $M$-invariant and for every $\theta(x;\overline{y})$ in $\Delta_{\varphi}$ the map $F_{\mu_{\theta}}^{\theta}:S_{\overline{y}}(M) \to [0,1]$ is continuous. 
    \item $\mu$ is generically stable over $M$ if and only if for each $\theta(x;\overline{y}) \in \Delta_{\varphi}$, the measure $\mu_{\theta} \in \mathfrak{M}_{\theta}(\mathcal{U})$ is generically stable over $M$.
    \item If $\mu$ is finitely approximable over $M$, then $\mu$ is generically stable over $M$. 
\end{enumerate}
\end{proposition}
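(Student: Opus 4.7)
The plan is to reduce items (i)--(iv) to routine unpacking of definitions in the presence of Propositions \ref{disc} and \ref{fswd}, leaving essentially all the content of the proposition in (v). Throughout write $\mu_\theta = I_\theta(\mu)$.

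For (i), the forward direction invokes saturation: given $\bar{b},\bar{c}$ with $tp(\bar{b}/M)=tp(\bar{c}/M)$, some $\sigma \in \Aut(\mathcal{U}/M)$ sends $\bar{b}$ to $\bar{c}$, so $M$-invariance of $\mu$ forces $\mu_\theta(\theta(x;\bar{b}))=\mu_\theta(\theta(x;\bar{c}))$. Conversely, every $\varphi$-formula $\psi(x;b)$ equals $\theta(x;\bar{b})$ for the $\theta \in \Delta_\varphi$ one obtains by gathering its parameter tuples into a single $\bar{b}$ and replacing them with $\bar{y}$-variables; since elements of $\Aut(\mathcal{U}/M)$ preserve types over $M$, $\theta$-invariance of $\mu_\theta$ over $M$ yields $\mu(\psi(x;b))=\mu(\psi(x;\sigma(b)))$. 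Item (ii) is tautological: Definition \ref{cont} applied to $\mu$ is literally the conjunction over $\theta \in \Delta_\varphi$ of the $\theta$-analogue of Definition \ref{definitions}(i) applied to $\mu_\theta$. Combining (i), (ii), and Proposition \ref{disc} applied to each $\mu_\theta$ then delivers (iii).

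For (iv), given (ii) and the definition of generic stability, one only needs the finite-satisfiability counterpart: $\mu$ is finitely satisfiable over $M$ iff each $\mu_\theta$ is. The forward implication is free because a $\theta$-formula is in particular a $\varphi$-formula, and the converse rewrites a $\varphi$-formula of positive $\mu$-measure as $\theta(x;\bar{b})$ for an appropriate $\theta \in \Delta_\varphi$ and applies $\theta$-finite satisfiability of $\mu_\theta$ to locate a realizer in $M^x$.

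The only step with real content is (v). Assume $\mu$ is finitely approximable over $M$. For finite satisfiability, given $\psi \equiv \theta(x;\bar{b})$ with $\mu(\psi)>0$, apply finite approximability at $\epsilon = \mu(\psi)/2$ to produce $a_1,\dots,a_n \in M^x$ with $\Av(\bar{a})(\psi)>0$, so some $a_i$ realizes $\psi$. For definability, fix $\theta \in \Delta_\varphi$ and $\epsilon>0$ and let $\bar{a}=(a_1,\dots,a_n) \in M^x$ be an $(\epsilon/2)$-approximation. Partition $\mathcal{U}^{\bar{y}}$ into the cells cut out by the $2^n$ $L$-formulas with parameters from $M$
\[
\rho_S(\bar{y}) \equiv \bigwedge_{i \in S}\theta(a_i,\bar{y}) \wedge \bigwedge_{i \notin S}\neg\theta(a_i,\bar{y}) \qquad (S \subseteq \{1,\dots,n\}).
\]
Whenever $\rho_S(\bar{b}) \wedge \rho_S(\bar{c})$ holds, $\Av(\bar{a})(\theta(x;\bar{b}))$ and $\Av(\bar{a})(\theta(x;\bar{c}))$ both equal $|S|/n$, so the triangle inequality with the $(\epsilon/2)$-approximation yields $|\mu(\theta(x;\bar{b}))-\mu(\theta(x;\bar{c}))|<\epsilon$. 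The main (mild) obstacle is simply verifying that this partition realizes Definition \ref{cont} for the given $(\theta,\epsilon)$, which the computation above makes immediate.
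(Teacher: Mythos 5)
Your proof is correct, and for items (i)--(iv) it coincides with the paper's (which simply observes that these follow from the definitions, saturation/smallness of $M$, and Proposition \ref{disc}). The finite-satisfiability half of (v) is also identical to the paper's. The one place you take a different route is the definability half of (v): the paper observes that finite approximability makes each $F^{\theta}_{\mu_\theta}$ a uniform limit of continuous functions, hence continuous, and then invokes (iii); you instead build the required partition directly, taking the cells $\rho_S(\overline{y}) \equiv \bigwedge_{i\in S}\theta(a_i,\overline{y}) \wedge \bigwedge_{i\notin S}\neg\theta(a_i,\overline{y})$ cut out by an $(\epsilon/2)$-approximation tuple and running the triangle inequality, since $\Av(\overline{a})(\theta(x;\cdot))$ is constant on each cell. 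The two arguments have the same combinatorial core, but yours is more elementary: it bypasses Fact \ref{tdc} and the continuity formalism altogether and exhibits the defining partition explicitly, whereas the paper's framing makes the link to continuity of $F^\theta_{\mu_\theta}$ transparent and keeps the proof uniform with the rest of the functional-analytic machinery used elsewhere. Either is a clean proof; the paper's is shorter given the infrastructure already in place.
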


\begin{proof} $(ii)$ and $(iv)$ follow directly from the definitions. $(i)$ follows from the fact that $M$ is small. $(iii)$ follows from Proposition \ref{disc} and $(ii)$. The proof of $(v)$ is identical to the general Keisler measure case \cite{guide}. However, for clarity, we present the proof here. Assume that $\mu$ is finitely approximable over $M$. We first show that $\mu$ is finitely satisfiable over $M$. Assume that $\mu(\theta(x;\overline{b}))> \delta > 0$. Then, there exists some $a_1,...,a_n$ in $M^{x}$ such that $|\mu(\theta(x;\overline{b})) - \Av(\overline{a})(\theta(x;\overline{b}))| < \frac{\delta}{2}$. Therefore, $\Av(\overline{a})(\theta(x;\overline{b})) > 0$ and so for some $a_i$ in our sequence, we have that $\mathcal{U} \models \theta(a_i, \overline{b})$. Now we show that $\mu$ is definable over $M$. It is clear that for every $\theta(x;\overline{y})$ in  $\Delta_{\varphi}$ the map $F^{\theta}_{\mu_{\theta}}$ is well-defined and a uniform limit of continuous functions. Therefore, $F^{\theta}_{\mu_{\theta}}$ is continuous. By $(iii)$, we may conclude that $\mu$ is definable over $M$ and hence generically stable over $M$. 
\end{proof}

We now present some properties for measures which are well known for types \cite{guide}. These properties will allow us to reduce our main result to the countable case and so we may apply Theorem \ref{mt1}. 

\begin{proposition}\label{fsi} Let $\mu \in \mathfrak{M}_{\varphi}(\mathcal{U})$. Assume that $\mu$ is finitely satisfiable over $N$ and $\mu$ is $M$-invariant for $M \prec N$. Then, $\mu$ is finitely satisfiable over $M$. 
\end{proposition}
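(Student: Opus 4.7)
The plan is to argue by contradiction via a standard saturation-plus-elementarity trick. Suppose toward a contradiction that $\mu$ is not finitely satisfiable over $M$, so there exists a $\varphi$-formula $\theta(x;\overline{b})$ with $\mu(\theta(x;\overline{b}))>0$ but $\mathcal U\models\neg\theta(a;\overline{b})$ for every $a\in M^{x}$. The goal is to use $M$-invariance to replace $\overline{b}$ by some $\overline{b}^{*}$ realizing the same type over $M$ yet avoiding every possible witness from $N$ — which would immediately contradict finite satisfiability over $N$ — and, failing that, to convert the obstruction into a first-order existential statement whose witnesses can be pulled from $N$ down to $M$ via the elementarity $M\prec\mathcal U$.

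Concretely, I would consider the partial type
\begin{equation*}
\Sigma(\overline{y})\ =\ tp(\overline{b}/M)\,\cup\,\{\neg\theta(a;\overline{y}):a\in N^{x}\}.
\end{equation*}
If $\Sigma$ is consistent, saturation of $\mathcal U$ (since $|M|,|N|$ are small) realizes it by some $\overline{b}^{*}\in\mathcal U^{\overline{y}}$. Because $tp(\overline{b}^{*}/M)=tp(\overline{b}/M)$, the homogeneity of $\mathcal U$ yields $\sigma\in\Aut(\mathcal U/M)$ with $\sigma(\overline{b})=\overline{b}^{*}$, so $M$-invariance gives $\mu(\theta(x;\overline{b}^{*}))=\mu(\theta(x;\overline{b}))>0$; then finite satisfiability over $N$ produces $a\in N^{x}$ with $\mathcal U\models\theta(a;\overline{b}^{*})$, contradicting $\overline{b}^{*}\models\neg\theta(a;\overline{y})$. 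Hence $\Sigma$ must be inconsistent, and compactness delivers $\chi(\overline{y})\in tp(\overline{b}/M)$ together with $a_{1},\dots,a_{n}\in N^{x}$ satisfying
\begin{equation*}
\mathcal U\models\forall\overline{y}\Bigl(\chi(\overline{y})\to\bigvee_{i=1}^{n}\theta(a_{i};\overline{y})\Bigr).
\end{equation*}

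The final step pulls the witnesses from $N$ down to $M$. The $L(M)$-sentence
\begin{equation*}
\exists \overline{x}_{1},\dots,\overline{x}_{n}\,\forall\overline{y}\Bigl(\chi(\overline{y})\to\bigvee_{i=1}^{n}\theta(\overline{x}_{i};\overline{y})\Bigr)
\end{equation*}
is true in $\mathcal U$ (witnessed by the $a_{i}$'s), so by $M\prec\mathcal U$ it is also true in $M$, yielding $a_{1}',\dots,a_{n}'\in M^{x}$ with the analogous universal property. Instantiating $\overline{y}$ at $\overline{b}$, which satisfies $\chi$, forces some $a_{i}'\in M^{x}$ to satisfy $\theta(a_{i}';\overline{b})$, contradicting the initial assumption. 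The main obstacle I anticipate is organizing the dichotomy cleanly: one must recognize that the failure of finite satisfiability over $M$, combined with $M$-invariance, forces a uniform compactness-theoretic bound whose witnesses a priori live in $N$ but whose existence is a first-order fact in the language over $M$, which then becomes transferable by elementarity. Everything else reduces to routine bookkeeping with saturation and compactness.
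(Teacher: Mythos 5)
Your proof is correct, but it reaches the conclusion by a genuinely different route than the paper. The paper realizes $N_1$ as a coheir of $tp(N/M)$ over $M\overline{b}$, transports a witness of finite satisfiability from $N$ to $N_1$ by an automorphism $\sigma\in\Aut(\mathcal U/M)$ (using $M$-invariance to keep the measure of $\psi(x;\sigma^{-1}(\overline{b}))$ positive), and then drops the witness into $M$ via the coheir property. You instead run a compactness dichotomy on $\Sigma(\overline{y})=tp(\overline{b}/M)\cup\{\neg\theta(a;\overline{y}):a\in N^x\}$: its consistency would contradict finite satisfiability over $N$ after an $\Aut(\mathcal U/M)$-conjugation (your use of $M$-invariance matches the paper's exactly at this step), so compactness yields a finitary cover $\chi(\overline{y})\to\bigvee_{i}\theta(a_i;\overline{y})$ with $\chi\in tp(\overline{b}/M)$ and $a_i\in N$, and then $M\prec\mathcal U$ replaces the $N$-witnesses by $M$-witnesses because their existence is an $L(M)$-existential sentence. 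Both arguments rest on the same two ingredients, $M$-invariance and finite satisfiability over $N$; yours trades the coheir realization for compactness plus elementarity, which is a bit longer on bookkeeping but arguably more self-contained, while the paper's is terser once coheirs are available and mirrors the classical argument for invariant types. One caveat shared by both (and left implicit in the paper): the monster must be saturated past $|N|$, so $N$ is tacitly small, both to realize a coheir of $tp(N/M)$ and to realize $\Sigma$ when consistent.
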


\begin{proof} 
Let $\psi(x;\overline{b})$ be a $\varphi$-formula and assume that $\mu(\psi(x;\overline{b})) >0$. Let $N_1$ realize a coheir of $tp(N/M)$ over $M\overline{b}$. Let $\sigma \in \Aut(\mathcal{U}/M)$ such that $\sigma:N \to N_1$. Then, $\mu(\psi(x;\sigma^{-1}(\overline{b}))) = \mu(\psi(x;\overline{b}))$ because $\mu$ is $M-$invariant. By finite satisfiability over $N$, there exists $a \in N$ such that $\mathcal{U} \models \psi(a,\sigma^{-1}(\overline{b}))$. Then, $\mathcal{U} \models \psi(\sigma(a),\overline{b})$ where $\sigma(a) \in N_1$.  Therefore, $N_1\cap \{a \in \mathcal{U}: \mathcal{U} \models \psi(a;\overline{b})\}$ is non-empty. By the coheir hypothesis, $M \cap \{ a \in \mathcal{U} : \mathcal{U} \models \psi(a;\overline{b})\}$ is non-empty. Therefore, $\mu$ is finitely satisfiable over $M$.
\end{proof} 

\begin{proposition}\label{count} Let $\mu \in \mathfrak{M}_{\varphi}(\mathcal{U})$. If $\mu$ is generically stable over $M$, then there exists $M_0 \prec M$ such that $|M_0| = \aleph_0$ and $\mu$ is generically stable over $M_0$. 
\end{proposition}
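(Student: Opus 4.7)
The plan is to extract a countable set of parameters from $M$ that suffices to witness definability of $\mu$, enlarge it to an elementary submodel $M_0$ by downward L\"owenheim-Skolem, and then descend finite satisfiability via Proposition \ref{fsi}. The key observation that makes this work is that $\Delta_{\varphi}$ is countable, being the Boolean algebra generated by the countable family $\{\varphi(x;y_i) : i \in \mathbb{N}\}$, so there are only countably many instances of definability to witness.

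Concretely, I would enumerate the pairs $(\theta, 1/n)$ with $\theta(x;\overline{y}) \in \Delta_{\varphi}$ and $n \in \mathbb{N}$. For each such pair, definability of $\mu$ over $M$ supplies finitely many $L(M)$-formulas $\rho_1^{\theta,n}(\overline{y}), \ldots, \rho_{m(\theta,n)}^{\theta,n}(\overline{y})$ that partition $\mathcal{U}^{\overline{y}}$ and control the oscillation of $\overline{b} \mapsto \mu(\theta(x;\overline{b}))$ to within $1/n$ on each piece. Let $A \subseteq M$ be the set of all parameters appearing in these formulas; then $A$ is countable. The downward L\"owenheim-Skolem theorem then produces a countable $M_0 \prec M$ with $A \subseteq M_0$.

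By construction, each $\rho_i^{\theta,n}$ is already an $L(M_0)$-formula, so the very same partitions witness definability of $\mu$ over $M_0$. Moreover, for any $\sigma \in \Aut(\mathcal{U}/M_0)$ and any $\overline{b} \in \mathcal{U}^{\overline{y}}$, $\overline{b}$ and $\sigma(\overline{b})$ lie in the same piece of every such partition; so for each $\theta \in \Delta_{\varphi}$ and each $n$, we have $|\mu(\theta(x;\overline{b})) - \mu(\theta(x;\sigma(\overline{b})))| < 1/n$, and letting $n \to \infty$ gives equality. Hence $\mu$ is $M_0$-invariant (in particular on $\varphi$-formulas). Applying Proposition \ref{fsi} with $N := M$ and smaller model $M_0 \prec M$, finite satisfiability of $\mu$ over $M$ transfers down to finite satisfiability over $M_0$. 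Combined with definability, this gives generic stability of $\mu$ over $M_0$.

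The only point requiring care is the bookkeeping of parameters: we need the collection of partitions witnessing definability of $\mu$ over $M$ to draw on only countably many parameters in total, which is exactly where countability of $\Delta_{\varphi}$ (and countability of the index set of $\epsilon$'s) is used. Once $M_0$ is constructed, everything else — the transfer of definability and the passage of finite satisfiability down to $M_0$ — either follows directly from the construction or is packaged inside Proposition \ref{fsi}, so no real obstacle remains.
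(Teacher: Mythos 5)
Your proof is correct and follows essentially the same line as the paper's: collect the countably many parameters appearing in the partitions witnessing definability for each pair $(\theta, 1/n)$, apply downward L\"owenheim--Skolem to get a countable $M_0 \prec M$ containing them, note that definability over $M_0$ gives $M_0$-invariance, and then invoke Proposition \ref{fsi} to push finite satisfiability down from $M$ to $M_0$. The only difference is that you spell out explicitly why definability over $M_0$ implies $M_0$-invariance, which the paper leaves as an immediate consequence.
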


\begin{proof} Assume that $\mu$ is generically stable over $M$. Then, for each $\theta(x;\overline{y})$ in $\Delta_{\varphi}$ and for each $\epsilon_n = \frac{1}{n}$, we can find $\mathcal{P}_{n}^\theta=\{\psi_1(\overline{y}),...,\psi_n(\overline{y})\}$ a partition of $\mathcal{U}^{\overline{y}}$ as in Definition \ref{cont}. Let $C$ be the collection of parameters used in each formula from each partition. Find $M_0 \prec M$ such that $C \subset M_0$ and $|M_0| = \aleph_0$. Then $\mu$ is definable over $M_0$, and in particular, $M_0$-invariant. By Proposition \ref{fsi}, we conclude that $\mu$ is finitely satisfiable over $M_0$. 
\end{proof}

We now essentially reduce our main result to Theorem \ref{mt1}. 

\begin{lemma}\label{lem} Assume that $\varphi(x;y)$ is NIP. Let $\mu \in \mathfrak{M}_{\varphi}(\mathcal{U})$. If $\mu$ is generically stable over $M$, then there exists a sequence $a_1,...,a_n$ in $M^{x}$ such that for any $b \in \mathcal{U}^y$, 
\begin{equation*}
    |\mu(\varphi(x;b)) - \Av(\overline{a})(\varphi(x;b))| < \epsilon. 
\end{equation*}
\end{lemma}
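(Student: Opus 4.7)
The plan is to reduce the statement to Theorem \ref{mt1}, which already provides uniform $\epsilon$-approximations for $\varphi$-generically stable measures whose base model is countable. Everything else is a transfer from a countable substructure $M_0 \prec M$ up to $M$.

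First, I would apply Proposition \ref{count} to extract a countable elementary substructure $M_0 \prec M$ such that $\mu$ remains generically stable over $M_0$ in the sense of Definition \ref{cont}. This is exactly what Proposition \ref{count} is designed for, and it is the only place where we use that $\mu$ is generically stable in the strong sense (the countable reduction uses both definability of $\mu_\theta$ for arbitrary $\theta \in \Delta_\varphi$ and Proposition \ref{fsi}).

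Second, I would observe that strong generic stability over $M_0$ as in Definition \ref{cont} specializes to the weak $\varphi$-generic stability of Definition \ref{definitions}. Indeed, $\varphi(x;y)$ itself belongs to $\Delta_\varphi$ and $\mu_\varphi = \mu$, so taking $\theta = \varphi$ in Definition \ref{cont} produces exactly the partitions demanded by Definition \ref{definitions}(i); finite satisfiability over $M_0$ is the same notion in both definitions. (Equivalently, this is Proposition \ref{equiv2}(iv) at $\theta = \varphi$.) Thus $\mu$ is $\varphi$-generically stable over the countable model $M_0$.

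Finally, given $\epsilon > 0$, I would invoke Theorem \ref{mt1} for $\mu$ with respect to the countable base $M_0$. The theorem returns a sequence $a_1,\ldots,a_n \in M_0^x \subseteq M^x$ with
\begin{equation*}
|\mu(\varphi(x;b)) - \Av(\overline{a})(\varphi(x;b))| < \epsilon
\end{equation*}
for all $b \in \mathcal{U}^y$, which is the conclusion of the lemma. There is no real obstacle here; the analytic content has already been absorbed into Theorem \ref{mt1} via Mazur's Lemma and the Bourgain--Fremlin--Talagrand theorem, and the reduction to a countable base was handled in Propositions \ref{fsi} and \ref{count}. The only point to verify is the implication from the strong to the weak notion of generic stability, which is immediate because $\varphi \in \Delta_\varphi$.
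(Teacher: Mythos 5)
Your proposal is correct and follows exactly the paper's own proof: reduce via Proposition \ref{count} to a countable $M_0 \prec M$, note that generic stability over $M_0$ in the sense of Definition \ref{cont} specializes to $\varphi$-generic stability (Definition \ref{definitions}) because $\varphi \in \Delta_\varphi$, and then apply Theorem \ref{mt1} over $M_0$, using $M_0^x \subseteq M^x$. The only difference is that you make explicit the strong-to-weak generic stability step, which the paper's three-sentence proof leaves implicit.
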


\begin{proof} By Theorem \ref{count}, $\mu$ is generically stable over some $M_0$ where $M_0 \prec M$ and $|M_0| = \aleph_0$. Then, we may apply Theorem \ref{mt1} to $\mu$ and $M_0$. Since $M_0 \subseteq M$, we are done. 
\end{proof}

\begin{theorem}\label{last} Assume that $\varphi(x;y)$ is NIP. Let $\mu \in \mathfrak{M}_{\varphi}(\mathcal{U})$. Then $\mu$ is generically stable over $M$ if and only if $\mu$ is finitely approximable over $M$. 
\end{theorem}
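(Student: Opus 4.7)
The plan is to derive Theorem \ref{last} by combining Proposition \ref{equiv2} with Lemma \ref{lem}, essentially reducing the statement for the whole Boolean algebra $\Delta_{\varphi}$ to a single-formula assertion that has already been established. The reverse direction, that finite approximability implies generic stability, is exactly Proposition \ref{equiv2}(v), so nothing further is needed there.

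For the forward direction, fix $\theta(x;\overline{y}) \in \Delta_{\varphi}$ and $\epsilon > 0$. The first step is to observe that $\theta$ is itself an NIP partitioned formula: by definition $\theta$ is a Boolean combination of instances $\varphi(x;y_i)$, and the fact recalled in Section 2.1 states that Boolean combinations of NIP formulas are NIP. Second, by Proposition \ref{equiv2}(iv), the restricted measure $\mu_{\theta} \in \mathfrak{M}_{\theta}(\mathcal{U})$ is generically stable over $M$ (in the local sense of Definition \ref{definitions}). These two facts put us exactly in the hypothesis of Lemma \ref{lem} applied to the NIP formula $\theta$ and the measure $\mu_{\theta}$.

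Applying Lemma \ref{lem} produces a sequence $a_1,\dots,a_n \in M^{x}$ with
\begin{equation*}
    |\mu_{\theta}(\theta(x;\overline{b})) - \Av(\overline{a})(\theta(x;\overline{b}))| < \epsilon
\end{equation*}
for every $\overline{b} \in \mathcal{U}^{\overline{y}}$. Since $\mu_{\theta}$ is just the restriction of $\mu$, the left-hand side equals $|\mu(\theta(x;\overline{b})) - \Av(\overline{a})(\theta(x;\overline{b}))|$, which is the required finite approximation at $\theta$. As $\theta \in \Delta_{\varphi}$ and $\epsilon > 0$ were arbitrary, this verifies Definition \ref{finap} for $\mu$.

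There is no real obstacle: the substantive work has already been done in Section 3 (sequential dependence of $\conv(\mathbb{F}_M^{\varphi})$ for NIP $\varphi$) and in Theorem \ref{mt1} and Proposition \ref{count} (which together yield Lemma \ref{lem}). The only point requiring care is to ensure that the Definition \ref{cont}/\ref{finap} formulation over the whole algebra $\Delta_{\varphi}$ can truly be tested one $\theta$ at a time; this is precisely the content of Proposition \ref{equiv2}(iv) together with the closure of NIP under Boolean combinations, so the reduction is legitimate.
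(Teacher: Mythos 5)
Your proof is correct and follows essentially the same route as the paper: the reverse direction is Proposition \ref{equiv2}(v), and the forward direction fixes $\theta \in \Delta_\varphi$, notes $\theta$ is NIP (Boolean combination of NIP formulas), invokes Proposition \ref{equiv2}(iv) to get generic stability of $\mu_\theta$, and then applies Lemma \ref{lem} to $\theta$ and $\mu_\theta$. The only difference is cosmetic: you spell out explicitly that each $\theta$ is NIP, a point the paper leaves implicit when it applies Lemma \ref{lem}.
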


\begin{proof} By $(v)$ of Proposition \ref{equiv2}, if $\mu$ is finitely approximable over $M$, then $\mu$ is generically stable over $M$. We only need to show the other direction. Assume $\mu$ is generically stable over $M$. By $(iv)$ of Proposition \ref{equiv2}, for any $\theta(x;\overline{y})$ in $\Delta_{\varphi}$, $\mu_{\theta}$ is generically stable over $M$. By construction, $\mu(\theta(x;\overline{b})) = \mu_{\theta}(\theta(x;\overline{b}))$ for every $\overline{b} \in \mathcal{U}^{\overline{y}}$. By Lemma \ref{lem}, for every $\epsilon > 0$ there are $a_1,...,a_n$ in $M^{x}$ so that for every $\overline{b} \in \mathcal{U}^{\overline{y}}$,  
\begin{equation*}
    |\mu_{\theta}(\theta(x;\overline{b})) - \Av(\overline{a})(\theta(x;\overline{b}))| <\epsilon.
\end{equation*}
Since $\mu(\theta(x;b)) = \mu_{\theta}(\theta(x;b))$, we conclude that $\mu$ is finitely approximable over $M$. 
\end{proof}

\end{document}